\documentclass[a4paper,11pt]{article}

\pdfoutput=1

\newcommand{\K}[1]{\mathbb{#1}}
\newcommand{\txt}[1]{\text{\normalfont{#1}}}

\usepackage{amsmath}
\usepackage{amsthm}
\usepackage{amssymb}
\usepackage[a4paper, bottom=5.33cm]{geometry}
\usepackage{graphicx}
\usepackage{microtype}
\usepackage{enumitem}
\usepackage{color}
\usepackage{relsize}
\usepackage{mathrsfs}

\usepackage[utf8]{inputenc}
\usepackage[russian,english]{babel}
\usepackage{mathtools} 
\mathtoolsset{showonlyrefs,showmanualtags} 

\numberwithin{equation}{section}

\usepackage[T2A,T1]{fontenc}
\newcommand{\Ch}{\mbox{\usefont{T2A}{\rmdefault}{m}{n}\CYRCH}}
\usepackage{lmodern}

\newcommand{\App}{\mathscr{A}}
\newcommand{\abs}[1]{\left\lvert #1 \right\rvert}
\DeclareMathOperator{\rk}{rank}

\newtheorem{thm}{Theorem}[section]
\newtheorem*{thm*}{Theorem}
\newtheorem{propo}[thm]{Proposition}
\newtheorem{lemma}[thm]{Lemma}
\newtheorem{cor}[thm]{Corollary}

\theoremstyle{definition}
\newtheorem{defi}[thm]{Definition}
\newtheorem{remark}[thm]{Remark}

\makeatletter
\newenvironment{proofof}[1]{\par
  \pushQED{\qed}%
  \normalfont \topsep6\p@\@plus6\p@\relax
  \trivlist
  \item[\hskip\labelsep
    \itshape{Proof of #1\@addpunct{.}}]\ignorespaces
}{%
  \popQED\endtrivlist\@endpefalse
}
\makeatother

\title{Chebyshev polynomials and \\best rank-one approximation ratio}
\author{Andrei Agrachev \and Khazhgali Kozhasov \and Andr\'e Uschmajew}
\date{}

\begin{document}
\maketitle

\begin{changemargin}{1.5cm}{1.5cm}
{\bf\noindent Abstract.}
We establish a new extremal property of the classical Chebyshev polynomials in the context of best rank-one approximation of tensors. We also give some necessary conditions for a tensor to be a minimizer of the ratio of spectral and Frobenius norms.
\end{changemargin}

\section*{Introduction and Outline}

The classical Chebyshev polynomials are known to have many extremal properties. The first result was established by Chebyshev himself: he proved \cite{Chebyshev1854} that a univariate monic polynomial with real coefficients that least deviates from zero on the interval $[-1,1]$ must be proportional to a Chebyshev polynomial of the first kind. Later there were further developments highlighting extremal properties of this class of univariate polynomials and its relevance for approximation theory; see \cite{GONCHAROV20002, OS2008} and references therein. In this article we discover a new extremal property of Chebyshev polynomials of the first kind in the context of the theory of rank-one approximations of real tensors.

Let us define \emph{the binary Chebyshev form of degree $d$} as
\begin{equation}\label{eq:Chebyshev binary}
\Ch_{d,2}(x_1,x_2) = \frac{(x_1+ix_2)^d+(x_1-ix_2)^d}{2} = \sum_{k=0}^{[d/2]} {d \choose 2k} (-1)^kx_1^{d-2k}x_2^{2k}.
\end{equation}
Note that its restriction to the unit circle $x_1^2 + x_2^2 = 1$ can be identified with the univariate Chebyshev polynomial of the first kind $x \mapsto \Ch_{d,2}(x,\sqrt{1-x^2})=\cos(d\arccos x)$, $x\in [-1,1]$. In Theorem \ref{thm:binary} we prove that the binary form \eqref{eq:Chebyshev binary} 
minimizes the ratio of the uniform norm on the unit circle and the Bombieri norm among all nonzero binary forms of the given degree $d$.

In \cite{Qi2011}, the more general problem of minimizing the ratio of the uniform norm on the unit sphere and the Bombieri norm among all nonzero forms of a given degree $d$ and number of variables $n$ was considered. Equivalently, identifying a homogeneous polynomial with the symmetric tensor of its coefficients, one can formulate this problem as follows: minimize the ratio of the spectral norm and the Frobenius norm among all nonzero real symmetric $n^d$-tensors. In an attempt to attack this problem we define the family of homogeneous $n$-ary forms \eqref{eq:Chebyshev form} that we call \emph{Chebyshev forms} $\Ch_{d,n}$.

Besides solving the above problem for the case of binary forms in Theorem \ref{thm:binary}, we solve it in the case of cubic ternary forms ($d=3$, $n=3$) in Theorem \ref{thm:ternary cubics}. This latter result in fact follows from a more general result that we obtain in Theorem \ref{thm:max orthogonal rank}: the maximal orthogonal rank of a real $(3,3,3)$-tensor is $7$. This in particular implies that the minimum value of the ratio of the spectral norm and the Frobenius norm of a nonzero $(3,3,3)$-tensor is $1/\sqrt{7}$ and hence gives an affirmative answer to a conjecture in \cite{KuehnPeetre2006}. 
Since the spectral norm of a tensor measures its relative distance to the set of rank-one tensors (see section~\ref{sub:ratio}) yet another way to interpret this result is that the symmetric tensor associated to the Chebyshev form $\Ch_{3,3}$ achieves the maximum possible relative distance to the set of all rank-one $(3,3,3)$-tensors.


In Theorem \ref{thm:varchar} we show that if a tensor minimizes the ratio of the spectral and the Frobenius norms, then it lies in the space spanned by its best rank-one approximations. In Theorem \ref{thm:varcharsym} we prove an analogous result for symmetric tensors or, equivalently, homogeneous forms: if a form minimizes the ratio of the uniform norm on the unit sphere and the Bombieri norm, then it lies in the space spanned by rank-one forms defined by global extrema of the restriction of the form to the unit sphere. These two results imply lower bounds on the number of best rank-one approximations for those tensors (respectively, on the number of global extrema of homogeneous forms) that minimize the ratio of norms; see Corollary \ref{cor:manybest}. 

In the next section we state all our results in detail. The results are proved in section~\ref{sec:proofs}. Section~\ref{sec: preliminaries} contains some necessary preliminaries and auxiliary results.

\section{Main results}\label{sub:main}

In this section we state our main results. They are all closely related but can be grouped into somewhat different directions.


\subsection{Chebyshev forms and their extremal property}\label{sec: Chebyshev forms}

In the following $P_{d,n}$ denotes the space of real $n$-ary forms of degree $d$ (real homogeneous polynomials of degree $d$ in $n$ variables), and $\| x \| = \sqrt{x_1^2 + \dots + x_n^2}$ is the Euclidean norm on $\K{R}^n$. For a form $p$ we denote by 
\begin{equation*}
\Vert p\Vert_\infty = \max_{\| x \| = 1} \abs{p(x)}
\end{equation*}
the \emph{uniform norm} of its restriction to the unit sphere. 

Every form $p \in P_{d,n}$ has a standard representation in the basis of monomials: $p(x) = \sum_{\abs{\alpha}=d} c_{\alpha}x^\alpha\in P_{d,n}$, where $\alpha = (\alpha_1,\dots,\alpha_n) \in \{0,1,\dots,d\}^n$ is a multi-index of length $|\alpha|=\alpha_1+\dots+\alpha_n=d$ and $x^\alpha = x_1^{\alpha_1} \cdots x_n^{\alpha_n}$.
The \emph{Bombieri norm} \cite{BBEM1990} of $p$ is defined as
\begin{equation}
\Vert p\Vert_B = \sqrt{\sum_{\abs{\alpha}=d} {d\choose \alpha}^{-1} \abs{c_\alpha}^2  },
\end{equation}
where ${d\choose \alpha} = \frac{d!}{\alpha_1!\dots\alpha_n!}$ is the multinomial coefficient.

The \emph{conformal orthogonal group} $CO(n) = \K{R}_+\times O(n)$ acts on the space $P_{d,n}$ of real forms as follows:
\begin{equation}\label{eq: }
  g=(s,\rho)\in CO(n), \  p\in P_{d,n}\ \mapsto\ g^*p \in P_{d,n}, \quad (g^* p)(x) = s p (\rho^{-1}x).
\end{equation}
Note that both the uniform norm and the Bombieri norm are invariant under the subgroup $O(n)$ of orthogonal transformations and their ratio is invariant under the full group $CO(n)$; see section~\ref{sub:ratio}.

In \cite{Qi2011}, Qi asked about the smallest possible ratio $\Vert p \Vert_\infty / \Vert p \Vert_B$ that the two norms can attain in a space $P_{d,n}$. 
In our first result we solve this problem for binary forms of any given degree and we also characterize minimizers in this case.

\begin{thm}\label{thm:binary}
For any nonzero $p\in P_{d,2}$ it holds that
\begin{equation}\label{eq:binary}
  \frac{\,\Vert p\Vert_\infty}{\Vert p\Vert_B} \geq \frac{\,\Vert \Ch_{d,2}\Vert_\infty}{\Vert \Ch_{d,2}\Vert_B} = \frac{1}{\sqrt{2^{d-1}}}.
\end{equation}
When $d=0,1$ one has equality in \eqref{eq:binary} for any $p\in P_{d,2}$, when $d=2$ equality holds if and only if $p=\pm g^*(x_1^2+x_2^2)$ or $p=g^*\Ch_2 = g^*(x_1^2-x_2^2)$, where $g\in CO(2)$. When $d\geq 3$ equality holds if and only if $p=g^*\Ch_{d,2}$, $g\in CO(2)$.
\end{thm}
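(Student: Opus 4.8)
The plan is to pass to complex coordinates and reduce the statement to a sharp inequality for trigonometric polynomials. Writing $z = x_1 + i x_2$, every $p \in P_{d,2}$ has a unique expansion $p = \sum_{j=0}^d \gamma_j z^{d-j}\bar z^{\,j}$ with the reality condition $\gamma_j = \overline{\gamma_{d-j}}$, and its restriction to the unit circle is the trigonometric polynomial $f(\theta) = \sum_{j=0}^d \gamma_j e^{i(d-2j)\theta}$, so that $\Vert p\Vert_\infty = \max_\theta \abs{f(\theta)}$. The first step is to diagonalize the Bombieri norm in this basis. Since the rotation subgroup $SO(2)$ acts on the complexified space by $z^{d-j}\bar z^{\,j}\mapsto e^{-i(d-2j)\phi}z^{d-j}\bar z^{\,j}$ and preserves the Hermitian extension of the Bombieri inner product, the monomials $z^{d-j}\bar z^{\,j}$ are eigenvectors with distinct eigenvalues, hence mutually orthogonal. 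Their norms I would compute from the apolarity identity $\langle (a\cdot x)^d,(b\cdot x)^d\rangle_B = (a\cdot\bar b)^d$: expanding $(z + t\bar z)^d = \sum_j \binom dj t^j z^{d-j}\bar z^{\,j}$ and evaluating $\langle (z+s\bar z)^d,(z+t\bar z)^d\rangle_B = 2^d(1+s\bar t)^d$, then matching coefficients of $(s\bar t)^j$, yields $\Vert z^{d-j}\bar z^{\,j}\Vert_B^2 = 2^d/\binom dj$. Consequently $\Vert p\Vert_B^2 = 2^d\sum_{j=0}^d \abs{\gamma_j}^2/\binom dj$, and the theorem reduces to the inequality $2\sum_{j=0}^d \abs{\gamma_j}^2/\binom dj \le \max_\theta\abs{f(\theta)}^2$.

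To prove this inequality (for $d\ge 2$) I would sample $f$ at the $d$ equally spaced nodes $\theta_l = \phi + \pi l/d$, $l = 0,\dots,d-1$, with the phase $\phi$ chosen so that $2\,\mathrm{Re}(\gamma_0 e^{id\phi}) = 2\abs{\gamma_0}$. The extreme frequencies $\pm d$ alias at these nodes, so $(-1)^l f(\theta_l)$ is a $d$-periodic sequence whose discrete Fourier coefficients are $\gamma_0 e^{id\phi} + \gamma_d e^{-id\phi}$ at frequency $0$ and quantities of moduli $\abs{\gamma_1},\dots,\abs{\gamma_{d-1}}$ at the remaining frequencies. Discrete Parseval together with $\abs{f(\theta_l)}\le \Vert f\Vert_\infty =: M$ then gives $4\abs{\gamma_0}^2 + \sum_{j=1}^{d-1}\abs{\gamma_j}^2 \le M^2$. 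Since $\binom dj \ge \binom d1 = d \ge 2$ for $1\le j\le d-1$, one has $1/\binom dj \le 1/2$, and combining these (using $\abs{\gamma_0} = \abs{\gamma_d}$) yields $2\abs{\gamma_0}^2 + \sum_{j=1}^{d-1}\abs{\gamma_j}^2/\binom dj \le 2\abs{\gamma_0}^2 + \tfrac12\sum_{j=1}^{d-1}\abs{\gamma_j}^2 \le \tfrac12 M^2$, which is exactly the required inequality. Taking square roots delivers \eqref{eq:binary}.

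For the equality analysis I would track when each estimate is tight. Equality in the Parseval step forces $\abs{f(\theta_l)} = M$ at all $d$ nodes, and equality in the binomial estimate forces $\gamma_j(1/\binom dj - 1/2) = 0$ for $1\le j\le d-1$. When $d\ge 3$ one has $\binom dj \ge d > 2$ strictly, so all middle coefficients vanish and $f(\theta) = 2\abs{\gamma_0}\cos(d\theta + \arg\gamma_0)$, which is precisely the $CO(2)$-orbit of $\Ch_{d,2}$; conversely these forms attain equality by direct substitution. The remaining cases are degenerate: for $d = 0,1$ the space $P_{d,2}$ is at most two-dimensional and every nonzero form has the same ratio, so the inequality is a trivial equality, while for $d = 2$ the binomial estimate is itself an equality ($\binom 21 = 2$), so the middle coefficient need not vanish and the node condition $\abs{f(\theta_0)} = \abs{f(\theta_1)} = M$ must instead be analysed directly to single out exactly the two orbits of $x_1^2\pm x_2^2$.

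The main obstacle I expect is twofold. First, correctly handling the aliasing of the $\pm d$ frequencies in the sampling step so that the $d$-point Parseval identity produces exactly the coefficient $4\abs{\gamma_0}^2$: this is what makes the constant $1/\sqrt{2^{d-1}}$ sharp, and a naive global Parseval loses the decisive factor of two. Second, the equality discussion at $d = 2$, where the binomial estimate no longer forces the middle frequency to vanish and the characterization rests entirely on the rigidity of the contact-point condition rather than on the coefficient bound.
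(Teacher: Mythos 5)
Your proposal is correct, and it takes a genuinely different route from the paper. The paper obtains the inequality \eqref{eq:binary} almost immediately from tensor machinery: every tensor in $\otimes^d_{j=1}\K{R}^2$ has orthogonal rank at most $2^{d-1}$, so by \eqref{eq:naive lower bound} one has $\App(\otimes^d_{j=1}\K{R}^2)\ge 1/\sqrt{2^{d-1}}$, and \eqref{eq:norms of Cheb} shows $\Ch_{d,2}$ attains this value; the bulk of the paper's effort then goes into the uniqueness statement for $d\ge 3$, proved by induction on $d$ at the level of symmetric $2^d$-tensors, using that slices and subtensors of extremal tensors are themselves extremal, the first-order condition \eqref{lemma:normal form}, and a hand-checked base case $d=3$. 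You instead work entirely inside $P_{d,2}$: your diagonalization of the (Hermitian-extended) Bombieri product on the monomials $z^{d-j}\bar z^{\,j}$ with $\Vert z^{d-j}\bar z^{\,j}\Vert_B^2 = 2^d/\binom{d}{j}$ is correct (it follows both from the $SO(2)$-eigenvalue argument and from matching coefficients in $\langle (z+s\bar z)^d,(z+t\bar z)^d\rangle_B = 2^d(1+s\bar t)^d$), and the sampling step at the nodes $\theta_l=\phi+\pi l/d$ is the decisive point: the aliasing of the frequencies $\pm d$ together with the phase choice $\gamma_0e^{id\phi}+\gamma_de^{-id\phi}=2\abs{\gamma_0}$ produces the coefficient $4\abs{\gamma_0}^2$ in the discrete Parseval identity, which combined with $\binom{d}{j}\ge d\ge 2$ for $1\le j\le d-1$ gives exactly $2\sum_j\abs{\gamma_j}^2/\binom{d}{j}\le \Vert p\Vert_\infty^2$. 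What your approach buys is a unified, self-contained argument in which the equality analysis falls out of the same two inequalities, replacing the paper's induction by the observation that $\binom{d}{j}>2$ strictly when $d\ge 3$; what the paper's route buys is integration with its general framework (extremality of subtensors, orthogonal tensors in the sense of \cite{LNSU2018}), which it reuses elsewhere, for instance in Proposition \ref{propo:n=4,8}. The only incompleteness on your side is the $d=2$ equality case, which you defer to a direct analysis without carrying it out; it is, however, a two-line computation in your own notation: with your phase normalization one has $f(\theta_0)=\gamma_1+2\abs{\gamma_0}$, $f(\theta_1)=\gamma_1-2\abs{\gamma_0}$, while $\Vert p\Vert_\infty=\abs{\gamma_1}+2\abs{\gamma_0}$, so equality forces $\abs{\gamma_0}\,\abs{\gamma_1}=0$, i.e., $\gamma_0=0$ (multiples of $z\bar z = x_1^2+x_2^2$) or $\gamma_1=0$ (the $CO(2)$-orbit of $x_1^2-x_2^2$), exactly as the theorem asserts.
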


For any $d\geq 0$ and $n\geq 2$ we define the \emph{$n$-ary Chebyshev form of degree $d$} as 
\begin{equation}\label{eq:Chebyshev form}
  \Ch_{d,n}(x_1,\dots,x_n) = \sum\limits_{k=0}^{[d/2]} {d \choose 2k}(-1)^k x_1^{d-2k}(x_2^2+\dots+x_n^2)^k.
\end{equation}
Note that the forms $\Ch_{d,n}$ are invariant under orthogonal transformations of $\K{R}^{n}$ that preserve the point $(1,0,\dots,0)$ and for any vector $v=(v_2,\dots,v_n)\in \K{R}^{n-1}$ of unit length one has that $\Ch_{d,n}(x_1,v_2x_2,\dots,v_nx_2)=\Ch_{d,2}(x_1,x_2)$ is the binary Chebyshev form~\eqref{eq:Chebyshev binary}. In this work we are particularly concerned with cubic Chebyshev forms
\begin{equation}\label{eq: Chebyshev cubics}
\Ch_{3,n}(x_1,\dots,x_n) = x_1^3 - 3 x_1(x_2^2 + \dots + x_n^2).
\end{equation}

It is an easy calculation that
\begin{equation}\label{eq:norms of Cheb}
\| \Ch_{d,n} \|_\infty = 1, \quad \| \Ch_{d,n} \|_B^2 = \sum\limits_{k=0}^{[d/2]} { d \choose 2k} \sum_{\substack{ \beta = (\beta_1,\dots,\beta_{n-1}), \\ \abs{\beta} = k}} {k \choose \beta}^2 {2k \choose 2\beta}^{-1},
\end{equation}
where $2\beta=(2\beta_1,\dots,2\beta_{n-1})$, and, in particular,
\begin{equation}\label{eq: norm of Chebyshev form}
\| \Ch_{3,n} \|^2_B = 3n - 2.
\end{equation}


In the case $d=2$ of quadratic forms one can easily determine the minimal ratio $\Vert p \Vert_\infty / \Vert p \Vert_B$ by passing to the ratio of spectral and Frobenius norms of symmetric matrices. Specifically,
\begin{equation}
  \frac{\,\Vert p\Vert_{\infty}}{\Vert p\Vert_B}\geq \frac{1}{\sqrt{n}},\quad p\in P_{2,n},
\end{equation}
 with equality only for quadratic forms $p = g^*(\pm x_1^2\pm\dots\pm x_n^2)$, where $g\in CO(n)$. These forms correspond to multiples of symmetric orthogonal matrices. Note that among these extremal quadratic forms there is the Chebyshev quadric $\Ch_{2,n}(x) = x_1^2-x_2^2-\dots-x_n^2$ which is classically known as \emph{the Lorentz quadric}. 
This fact for $d=2$ together with Theorem \ref{thm:binary} might suggest thinking that Chebyshev forms $\Ch_{d,n}$ also minimize the ratio of uniform and Bombieri norm in $P_{d,n}$ for $d\geq 3$ and $n\geq 3$. We show that it is indeed the case for the first ``nontrivial'' situation $d=3$, $n=3$ of ternary cubics.
\begin{thm}\label{thm:ternary cubics}
  Let $p\in P_{3,3}$ be a nonzero ternary cubic form. Then 
  \begin{equation}
    \frac{\,\Vert p\Vert_\infty}{\Vert p\Vert_B} \geq \frac{1}{\sqrt{7}}
  \end{equation}
and equality holds if $p=g^*\Ch_{3,3}$, where $g\in CO(3)$. 
\end{thm}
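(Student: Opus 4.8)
The plan is to deduce the bound from the general fact that, for a $(3,3,3)$-tensor, the ratio of spectral to Frobenius norm is controlled by the orthogonal rank, which by Theorem \ref{thm:max orthogonal rank} never exceeds $7$. First I would pass from forms to symmetric tensors: to a cubic $p\in P_{3,3}$ associate the unique symmetric $(3,3,3)$-tensor $T$ with $p(x)=T(x,x,x)$. Under this correspondence the Bombieri norm is exactly the Frobenius norm $\|T\|_F$; indeed, writing the monomial coefficients as $c_\alpha=\binom{3}{\alpha}T_\alpha$, where $T_\alpha$ is the common value of the tensor entries with index multiset $\alpha$, one gets $\|p\|_B^2=\sum_\alpha \binom{3}{\alpha}^{-1}c_\alpha^2=\sum_\alpha \binom{3}{\alpha}T_\alpha^2=\|T\|_F^2$. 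Moreover Banach's theorem yields $\|p\|_\infty=\|T\|_\sigma$, the spectral norm, because for a symmetric tensor the maximum of $\abs{T(u,v,w)}$ over unit triples is attained on the diagonal $u=v=w$. Hence $\|p\|_\infty/\|p\|_B=\|T\|_\sigma/\|T\|_F$, and it suffices to bound the latter from below.

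The key elementary step is the lemma: if $T=\sum_{i=1}^r T_i$ is a decomposition into pairwise Frobenius-orthogonal rank-one terms, then $\|T\|_\sigma\ge \|T\|_F/\sqrt{r}$. To see this, test the spectral norm against the unit rank-one tensor $T_i/\|T_i\|_F$: using $\langle T_j,T_i\rangle=0$ for $j\ne i$ one obtains $\|T\|_\sigma\ge \langle T,\,T_i/\|T_i\|_F\rangle=\|T_i\|_F$ for every $i$, and then Pythagoras gives $\|T\|_F^2=\sum_i\|T_i\|_F^2\le r\,\|T\|_\sigma^2$. Applying the lemma to an orthogonal rank decomposition of $T$ with $r=\mathrm{orank}(T)$ terms yields $\|T\|_\sigma/\|T\|_F\ge 1/\sqrt{\mathrm{orank}(T)}\ge 1/\sqrt{7}$, the last inequality being exactly the content of Theorem \ref{thm:max orthogonal rank}. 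This proves $\|p\|_\infty/\|p\|_B\ge 1/\sqrt{7}$ for all nonzero $p\in P_{3,3}$.

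Finally, equality at $p=\Ch_{3,3}$ is the computation already recorded in \eqref{eq:norms of Cheb}--\eqref{eq: norm of Chebyshev form}: $\|\Ch_{3,3}\|_\infty=1$ and $\|\Ch_{3,3}\|_B^2=3\cdot 3-2=7$, so the ratio equals $1/\sqrt{7}$. Since both norms are $O(3)$-invariant and their ratio is invariant under the full conformal group $CO(3)$, the same extremal value is attained at every $g^*\Ch_{3,3}$ with $g\in CO(3)$. I expect the genuine difficulty of the whole argument to be concentrated entirely in Theorem \ref{thm:max orthogonal rank}---the statement that no $(3,3,3)$-tensor requires more than seven mutually orthogonal rank-one summands; once that input is granted, the passage through the lemma and the form-to-tensor dictionary is routine.
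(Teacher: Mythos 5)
Your proposal is correct and follows essentially the same route as the paper: the paper deduces Theorem \ref{thm:ternary cubics} from Theorem \ref{thm:max orthogonal rank} via the orthogonal-rank bound \eqref{eq:lower bound ortho rank} (whose derivation is exactly your lemma), the isometry \eqref{eq:spectral=uniform}, \eqref{eq:F=B} between $(P_{3,3},\Vert\cdot\Vert_\infty,\Vert\cdot\Vert_B)$ and $(\txt{Sym}^3(\K{R}^3),\Vert\cdot\Vert_2,\Vert\cdot\Vert_F)$, and the computation \eqref{eq:norms of Cheb}, \eqref{eq: norm of Chebyshev form} for equality at $\Ch_{3,3}$. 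You are also right that the entire difficulty is concentrated in Theorem \ref{thm:max orthogonal rank}, which the paper proves separately by a slicing argument.
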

Theorem~\ref{thm:ternary cubics} is part of Corollary~\ref{cor:App for ternary cubics} further below.


However, at least for all sufficiently large $n$, the Chebyshev form $\Ch_{3,n}$ is not a global minimizer for the norm ratio. Indeed,~\cite[Thm.~5.3]{LZ2019} provides examples of symmetric $n \times n \times n$ tensors with $n = 2^m$ that yield forms $p \in P_{3,2^m}$ satisfying
\begin{equation}\label{eq: counter-example}
\frac{\,\Vert p \Vert_\infty}{\Vert p \Vert_B} = \left( \frac{2}{3} \right)^m = n^{{\frac{\ln (2/3)}{\ln 2}}} \le n^{-0.584},
\end{equation}
whereas, by \eqref{eq:norms of Cheb} and \eqref{eq: norm of Chebyshev form},
\begin{equation}
  \frac{\,\Vert\Ch_{3,n}\Vert_\infty}{\Vert\Ch_{3,n}\Vert_B} = \frac{1}{\sqrt{3n - 2}}.
\end{equation}
For instance, for $n = 2^{10} = 1024$, it holds that $\| \Ch_{3,n} \|_\infty / \| \Ch_{3,n} \|_B \ge 0.0187 > (2/3)^{10} \approx 0.0173$.

Interestingly, while not being a global minimum, one can show that $\Ch_{3,n}$ is a local minimum of the ratio of the two norms on the set of nonzero cubic $n$-ary forms. 
\begin{thm}\label{thm:locmin}
Let $n\geq 2$. For all $p\in P_{3,n}$ in a small neighborhood of $\Ch_{3,n}$ we have
\begin{equation}
 \frac{\,\Vert p\Vert_{\infty}}{\Vert p \Vert_B}\geq  \frac{\,\Vert \Ch_{3,n}\Vert_\infty}{\Vert \Ch_{3,n}\Vert_B}.
\end{equation}
\end{thm}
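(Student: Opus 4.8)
The ratio $\Vert p\Vert_\infty/\Vert p\Vert_B$ is invariant under scaling, so the claim is equivalent to the assertion that the function $\Phi(p):=(3n-2)\,\Vert p\Vert_\infty^2-\Vert p\Vert_B^2$, which satisfies $\Phi(\Ch_{3,n})=0$, is nonnegative on a neighborhood of $\Ch:=\Ch_{3,n}$. Writing $p=\Ch+q$ with $q$ small, I would first record the set $M\subset S^{n-1}$ of global maximizers of $\abs{\Ch}$: restricting $\Ch$ to the sphere and using $x_1^2+\lVert\tilde x\rVert^2=1$ (where $\tilde x=(x_2,\dots,x_n)$) turns it into the univariate Chebyshev polynomial $T_3(x_1)=4x_1^3-3x_1$, whose extrema $\pm1$ on $[-1,1]$ are attained at $x_1=\pm1$ and $x_1=\mp\tfrac12$. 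Hence $M=\{\pm e_1\}\cup\{x:\ x_1=\pm\tfrac12,\ \lVert\tilde x\rVert=\tfrac{\sqrt3}{2}\}$, with $\varepsilon_x:=\Ch(x)=\pm1$ constant on each piece. For $n=2$ the set $M$ is finite and the statement follows already from Theorem~\ref{thm:binary}; the substance is the case $n\ge3$, where $M$ contains two $(n-2)$-spheres along which $\Ch$ is flat.

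The key linear tool is the apolar (Bombieri) inner product $\langle\cdot,\cdot\rangle_B$ inducing $\Vert\cdot\Vert_B$, together with its reproducing property $\langle r,\langle u,\cdot\rangle^3\rangle_B=r(u)$. Since $\Vert\cdot\Vert_\infty$ is convex, for every $x\in M$ one has $\Vert\Ch+q\Vert_\infty\ge1+\varepsilon_x q(x)$, whence $\Vert\Ch+q\Vert_\infty\ge1+h(q)$ with $h(q)=\sup_{x\in M}\varepsilon_x q(x)=\sup_{x\in M}\langle\varepsilon_x\langle x,\cdot\rangle^3,q\rangle_B$. A direct computation, averaging the cubes $\langle u,\cdot\rangle^3$ over $M$, produces a probability measure $\nu$ on $M$ with $\int_M\varepsilon_x\langle x,\cdot\rangle^3\,d\nu(x)=\tfrac{1}{3n-2}\Ch$ (explicitly, mass $\lambda=\tfrac{n+2}{3(3n-2)}$ on the poles and $1-\lambda$ spread uniformly over the equatorial spheres). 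This exhibits $\Ch$ as a critical point and yields $a(q):=\tfrac{1}{3n-2}\langle\Ch,q\rangle_B\le h(q)$ for all $q$. Comparing $\Vert\Ch+q\Vert_\infty^2\ge(1+h(q))^2$ with $\tfrac{1}{3n-2}\Vert\Ch+q\Vert_B^2=1+2a(q)+b(q)$, where $b(q):=\tfrac{1}{3n-2}\Vert q\Vert_B^2$, the first-order term is strictly favorable unless $q$ lies in the subspace $L=\{q:\ \varepsilon_x q(x)\ \text{is constant on}\ M\}$ (equivalently $h(q)=a(q)=:c$), because $\nu$ has full support. It therefore remains to treat directions $q\in L$.

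For $q\in L$ set $\phi(t)=\Vert\Ch+tq\Vert_\infty$ and expand $\phi(t)=1+ct+\tfrac12 s\,t^2+o(t^2)$; after squaring, $\Phi(\Ch+tq)\ge0$ reduces to $s\ge b(q)-c^2$. By Cauchy--Schwarz $c^2=a(q)^2\le b(q)$, with equality only for $q\parallel\Ch$ (the scaling direction, where $\Phi\equiv0$), so the target is a sharp lower bound on the one-sided second derivative $s=\phi''(0)$. Since every point of $M$ stays first-order active along $L$, $s$ equals the maximum over the branches of $M$ of their second-order growth rate. Near a pole $e_1$ one has $\Ch=1-\tfrac92\lVert\tilde x\rVert^2+O(\lVert\tilde x\rVert^4)$, a nondegenerate maximum, and maximizing $\Ch+tq$ in the tangent directions gives the branch rate $\tfrac19\lVert\nabla_{\!T}q(e_1)\rVert^2$ (a Schur complement of the Hessian $-9I$). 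The difficulty concentrates at the equatorial spheres: there the tangential Hessian of $\Ch$ equals $-9$ in the single meridional direction but vanishes in the $(n-2)$ directions along the sphere, so the maximizer drifts and one must optimize jointly over the meridional shift and the tangential displacement, carefully incorporating the geodesic curvature of the latitude sphere inside $S^{n-1}$ and the second-order jet of $q$ along $M$. Controlling this degenerate optimization, and proving that it still delivers $s\ge b(q)-c^2$, is the main obstacle.

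To turn this into a finite, $n$-uniform computation I would exploit that the stabilizer of $\Ch$ contains $O(n-1)$ acting on $\tilde x$, so that both $b(\cdot)$ and the second-order form $q\mapsto s(q)$ are $O(n-1)$-invariant quadratic forms on the slice $L$. Decomposing $P_{3,n}$ into $O(n-1)$-isotypic components---spherical harmonics of degrees $0,1,2,3$ in $\tilde x$ multiplied by the appropriate powers of $x_1$, each of bounded multiplicity---block-diagonalizes the comparison $s\ge b-c^2$ into finitely many scalar inequalities, in which the degenerate (equality) directions are precisely the trivial components spanned by $\Ch$ and by the tangent space to the $O(n)$-orbit. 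Verifying the remaining strict inequalities component by component then completes the proof, the case $n=2$ being covered separately by Theorem~\ref{thm:binary}.
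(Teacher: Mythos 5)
Your first two paragraphs reproduce, correctly, the critical-point structure that the paper derives from Corollary~\ref{cor:deccubic}: your measure $\nu$ is the decomposition \eqref{eq:Ch3n}, and your degenerate subspace $L$ is exactly $\K{R}\,\Ch_{3,n}$ plus the tangent space to the $O(n)$-orbit (Lemma~\ref{lem:orbit}). The proof, however, has a genuine gap, and it sits exactly where you admit the ``main obstacle'' is. First, the reduction ``it therefore remains to treat directions $q\in L$'' is not legitimate: the target inequality $(1+h(q))^2\ge 1+2a(q)+b(q)$ requires $2(h(q)-a(q))\ge b(q)-h(q)^2$, whose right-hand side is quadratic in $\Vert q\Vert_B$, while the gain $h(q)-a(q)$, though positive off $L$, tends to zero as the direction $q/\Vert q\Vert_B$ approaches $L$; it is therefore not bounded below by any fixed multiple of $\Vert q\Vert_B$, and the first-order argument fails on a whole cone around $L$, not just on $L$ itself. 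Second, on (and near) $L$ your argument is a line-by-line expansion of $\phi(t)=\Vert\Ch_{3,n}+tq\Vert_\infty$; even if the inequality $s\ge b-c^2$ were verified in every direction, directional minimality along each line does not imply minimality on a neighborhood without uniform control of the $o(t^2)$ remainders --- and that uniformity is precisely the degenerate equatorial-drift optimization you leave open. The representation-theoretic block-diagonalization is a plan, not a proof: none of the resulting scalar inequalities is actually checked.

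The paper's proof shows that all of this second-order analysis can be avoided, and the ingredient your proposal is missing is to quotient out the degenerate directions by the group action \emph{before} estimating. Since the ratio is $O(n)$-invariant, the map $f\colon H\times M\to S$, $(h,m)\mapsto h^*m$ --- with $H\subset O(n)$ transverse to the stabilizer $G\simeq O(n-1)$ and $M\subset S$ a slice transverse to the orbit of $\Ch_{3,n}$ --- is a local diffeomorphism near $(\txt{id},\Ch_{3,n})$, so local minimality need only be proved on the slice $M$. There, no degenerate directions survive: by Lemma~\ref{lem:orbit}, every direction $p$ tangent to $M$ and Bombieri-orthogonal to $\Ch_{3,n}$ is nonzero somewhere on the extremal set $C_n$ (your maximizer set); compactness gives a uniform $\delta'>0$ with $\max_{C_n}\abs{p}\ge\delta'$, and a sign-correction argument --- using the whole $G$-family of decompositions \eqref{eq:manydec} together with $\langle p,\Ch_{3,n}\rangle_B=0$ --- produces a point $x\in C_n$ with $\Ch_{3,n}(x)p(x)\ge\delta'/(10n)$. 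This first-order gain, uniform over the compact sphere of transverse directions, beats all higher-order terms along the geodesics $\cos t\cdot\Ch_{3,n}+\sin t\cdot p$ and hence yields a genuine neighborhood in $M$. In short: with the group reduction in hand, your own first-order machinery (the measure $\nu$ and the bound $h\ge a$) already finishes the proof, and the second-order analysis that blocks your proposal becomes unnecessary.
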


\subsection{Best rank-one approximation ratio, orthogonal rank, and \\orthogonal~tensors}\label{subsec: best rank-one ratio etc}

Let $\otimes^d_{j=1}\K{R}^{n_j}$ denote the space of real $(n_1,\dots,n_d)$-tensors, considered as $n_1 \times \dots \times n_d$ tables $A=(a_{i_1\dots i_d})$ of real numbers. For two $(n_1,\dots,n_d)$-tensors 
their \emph{Frobenius inner product} is given by
\begin{equation}
  \langle A,A^\prime\rangle_F = \sum_{i_1,\dots,i_d=1}^{n_1,\dots,n_d} a_{i_1\dots i_d}a^\prime_{i_1\dots i_d} 
\end{equation}
and $\| A \|_F = \sqrt{\langle A, A \rangle_F}$ denotes the induced Frobenius norm.

The outer product $x^{(1)} \otimes \dots \otimes x^{(d)}$ of vectors $x^{(j)} \in \K{R}^{n_j}$ is an $(n_1,\dots,n_d)$-tensor $X$ with entries $(x^{(1)}_{i_1} \cdots x^{(d)}_{i_d})$. Nonzero tensors of this form are said to be of \emph{rank one}, denoted $\rk(X) = 1$. 
The \emph{spectral norm} on $\otimes^d_{j=1}\K{R}^{n_j}$ is defined as 
\begin{equation}\label{eq:spectral norm}
  \Vert A\Vert_2 = \max\limits_{\| x^{(1)} \| = \dots = \| x^{(d)} \| = 1} \langle A,x^{(1)}\otimes \dots \otimes x^{(d)}\rangle_F = \max\limits_{\substack{\| X \|_F = 1, \\ \rk(X) = 1}} \langle A,X\rangle_F,
\end{equation} 
where $\| \cdot \|$ denotes the standard Euclidean norm. 

Given a $(n_1,\dots,n_d)$-tensor $A$, a rank-one tensor $Y\in \otimes^d_{j=1}\K{R}^{n_j}$ is called \emph{a best rank-one approximation to $A$} if it minimizes the Frobenius distance to $A$ from the set of rank-one tensors, that is,
\begin{equation}\label{eq:distance}
  \Vert A-Y\Vert_F= \min_{X\in \otimes^d_{j=1}\K{R}^{n_j},\,\txt{rank}(X)=1} \Vert A-X\Vert_F.
\end{equation}
The notion of \emph{best rank-one approximation ratio} of a tensor space was introduced by Qi in \cite{Qi2011}. For the space of $(n_1,\dots,n_d)$-tensors it is defined as
\begin{equation}\label{eq:ratiointro}
  \App(\otimes^d_{j=1}\K{R}^{n_j}) = \min_{0 \neq A\in \otimes^d_{j=1}\K{R}^{n_j}} \frac{\Vert A\Vert_2}{\,\Vert A\Vert_F}.
\end{equation}
It is the largest constant $c$ satisfying $\|A \|_2 \ge c\| A \|_F$ for all $A \in \otimes^d_{j=1}\K{R}^{n_j}$. Another interpretation is that $\App(\otimes^d_{j=1}\K{R}^{n_j})$ is the inverse of the operator norm of the identity map from $(\otimes^d_{j=1}\K{R}^{n_j}, \| \cdot \|_2)$ to $(\otimes^d_{j=1}\K{R}^{n_j}, \| \cdot \|_F)$.

\begin{defi}
A nonzero tensor $A \in \otimes^d_{j=1}\K{R}^{n_j}$ is called \emph{extremal} if it is a minimizer in \eqref{eq:ratiointro}, that is, if it satisfies
\[
\frac{\|A\|_2}{\,\| A \|_F}=\App(\otimes^d_{j=1}\K{R}^{n_j}).
\]
\end{defi}
Seen as a function of a tensor $A\in \otimes^d_{j=1}\K{R}^{n_j}$, $\Vert A\Vert_F=1$, of unit Frobenius norm, the rank-one approximation error \eqref{eq:distance} attains its maximum exactly at extremal tensors of unit Frobenius norm. The precise relation between~\eqref{eq:distance} and~\eqref{eq:ratiointro} together with a possible application is given in~\eqref{eq:relation to rank one} in subsection \ref{sub:ratio}.

The space $\txt{Sym}^d(\K{R}^n)$ of symmetric $n^d$-tensors consists of tensors $A=(a_{i_1\dots i_d})$ in $\otimes^d_{j=1}\K{R}^n$ that satisfy $a_{i_{\sigma_1}\dots i_{\sigma_d}} = a_{i_1\dots i_d}$ for any permutation $\sigma$ on $d$ elements. This space is isomorphic to the space $P_{d,n}$ of homogeneous forms as explained in subsection~\ref{subsec: tensors and forms}. Under this isomorphism Frobenius and spectral norms of a symmetric tensor correspond to Bombieri norm and uniform norm, respectively.  
\emph{The best rank-one approximation ratio $\App(\txt{Sym}^d(\K{R}^n))$} of the space of symmetric tensors is defined by replacing $\otimes^d_{j=1}\K{R}^{n_j}$ with $\txt{Sym}^d(\K{R}^n)$ in~\eqref{eq:ratiointro} and is equal to the minimum ratio between the uniform and the Bombieri norms of a nonzero form in $P_{d,n}$. In this context it is important to note that the definition of the spectral norm of a symmetric tensor does not change if the maximum in~\eqref{eq:spectral norm} is taken over symmetric rank-one tensors only; see subsection~\ref{sub:tensors}.

A general formula for $\App(\otimes^d_{j=1}\K{R}^{n_j})$ or $\App(\txt{Sym}^d(\K{R}^n))$ is not known except for special cases; see~\cite{LNSU2018}. Determining or estimating these constants is an interesting problem on its own and may have some useful applications for rank-truncated tensor optimization methods (see section~\ref{sub:ratio}). The present work contains some new contributions with the main focus on symmetric tensors.

One always has
\begin{equation}
  0<\App(\otimes^d_{j=1}\K{R}^{n_j})\leq 1\quad \txt{and}\quad 0<\App(\otimes^d_{j=1}\K{R}^n)\leq \App(\txt{Sym}^d(\K{R}^n))\leq 1.
\end{equation}
The asymptotic behavior of $\App(\otimes^d_{j=1}\K{R}^n)$ is $O(1/\sqrt{n^{d-1}})$; see~\cite{CobosKuehnPeetre1999}. For $d = 3$ the currently best known upper bound valid for all $n$ seems to be $1.5 n^{\ln(2/3)/\ln 2} \le 1.5 n^{-0.584}$ and follows directly from~\eqref{eq: counter-example}; see~\cite{LZ2019}.

Lower bounds on the best rank-one approximation ratio can be obtained from decomposition of tensors into pairwise orthogonal rank-one tensors. For $A\in \otimes^d_{j=1}\K{R}^{n_j}$ let 
\begin{equation}\label{eq:ortdecintro}
A=Y_1+\dots+Y_r,
\end{equation}
where $Y_1,\dots,Y_r$ are rank-one $(n_1,\dots,n_d)$-tensors such that $\langle Y_\ell,Y_{\ell'}\rangle_F=0$ for $\ell \neq \ell'$. The smallest possible number $r$ that allows such a decomposition \eqref{eq:ortdecintro} is called the \emph{orthogonal rank} of the tensor $A$~\cite{Franc1992} and will be denoted by $\rk_{\perp}(A)$. Since at least one of the terms in~\eqref{eq:ortdecintro} has to satisfy $\langle A, Y_i \rangle_F \ge \| A \|^2_F /r$, it follows that
\begin{equation}
  \frac{\|A\|_2}{\,\|A\|_F} \geq \frac{1}{\sqrt{\rk_{\perp}(A)}}
\end{equation}
for all $A\in \otimes^d_{j=1}\K{R}^{n_j}$. 
Thus an upper bound on the maximal orthogonal rank in a given tensor space leads to a lower bound on the best rank-one approximation ratio of that tensor space:
\begin{equation}\label{eq:lower bound ortho rank}
\App(\otimes^d_{j=1}\K{R}^{n_j}) \ge \frac{1}{\sqrt{\max_{\otimes^d_{j=1}\K{R}^{n_j}} \rk_{\perp}(A)}}.
\end{equation}
It appears that for all known values of $\App(\otimes^d_{j=1}\K{R}^{n_j})$ this is actually an equality~\cite{KM2015,LNSU2018}.

The values for $\App(\K{R}^{n_1}\otimes \K{R}^{n_2}\otimes\K{R}^{n_3})$ have been determined in~\cite{KuehnPeetre2006} for all combinations $n_1,n_2,n_3 \le 4$, except for $(3,3,3)$-tensors. 
In the present work we are able to settle this remaining case, by determining the maximum possible orthogonal rank of a $(3,3,3)$-tensor.
\begin{thm}\label{thm:max orthogonal rank}
The maximal orthogonal rank of a $(3,3,3)$-tensor is seven. 
\end{thm}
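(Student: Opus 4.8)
The plan is to establish the equality in two steps: every $(3,3,3)$-tensor has orthogonal rank at most $7$, and some $(3,3,3)$-tensor attains $7$. The second step is immediate. Let $A$ be the symmetric tensor corresponding to the Chebyshev form $\Ch_{3,3}$; by~\eqref{eq:norms of Cheb} and~\eqref{eq: norm of Chebyshev form} its Frobenius and spectral norms are $\|A\|_F=\|\Ch_{3,3}\|_B=\sqrt 7$ and $\|A\|_2=\|\Ch_{3,3}\|_\infty=1$. Substituting into the inequality $\|A\|_2/\|A\|_F\ge 1/\sqrt{\rk_\perp(A)}$ gives $\rk_\perp(A)\ge 7$, so the maximal orthogonal rank is at least $7$.

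The main work is the upper bound, which I would approach by slicing. Fix an orthonormal basis $c_1,c_2,c_3$ of the third factor $\K{R}^3$ and form the matrix slices $M_k=A(\cdot,\cdot,c_k)$, so that $A=\sum_{k=1}^3 M_k\otimes c_k$. Replacing each $M_k$ by its singular value decomposition $M_k=\sum_j\sigma_{kj}\,u_{kj}v_{kj}^{\top}$ turns this into a decomposition of $A$ into $\sum_k\rk(M_k)$ rank-one terms $\sigma_{kj}\,u_{kj}\otimes v_{kj}\otimes c_k$. These terms are pairwise orthogonal: two terms with different $k$ are orthogonal in the third mode because the $c_k$ are, and two terms with the same $k$ are orthogonal in the first (and second) mode by the singular value decomposition. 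Hence $\rk_\perp(A)\le\sum_{k=1}^3\rk(M_k)$ for every choice of orthonormal slicing basis.

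It remains to choose the basis so that at least two slices are singular, which forces $\sum_k\rk(M_k)\le 2+2+3=7$. Writing $N_l=A(\cdot,\cdot,e_l)$ for the slices in the standard basis, we have $M_k=\sum_l(c_k)_lN_l$ and therefore $\det M_k=f(c_k)$, where $f(t)=\det\!\big(\sum_l t_lN_l\big)$ is a cubic form on $\K{R}^3$. Since $f$ has odd degree it vanishes at some unit vector $c_1$. The crucial point is that $f$ then vanishes at some unit vector orthogonal to $c_1$ as well: on the unit circle $\theta\mapsto\cos\theta\,a+\sin\theta\,b$ of $c_1^{\perp}$, the function $g(\theta)=f(\cos\theta\,a+\sin\theta\,b)$ satisfies $g(\theta+\pi)=-g(\theta)$ by homogeneity of odd degree, hence changes sign and has a zero. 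Taking such a zero as $c_2$ and setting $c_3=c_1\times c_2$ yields $\rk(M_1),\rk(M_2)\le 2$ and $\rk(M_3)\le 3$, so $\rk_\perp(A)\le 7$.

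I expect the decisive step to be this geometric argument producing two orthogonal singular slicing directions; the verification that the slicing-plus-SVD construction gives genuinely orthogonal rank-one terms is routine but must be carried out carefully. Combining the two bounds shows that the maximal orthogonal rank of a $(3,3,3)$-tensor equals $7$, and incidentally that $\rk_\perp(\Ch_{3,3})=7$.
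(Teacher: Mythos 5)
Your proof is correct, and your upper-bound argument takes a genuinely different route from the paper's. The paper writes $A=(A_1|A_2|A_3)$ and argues by cases: if no slice is invertible, each slice splits into two Frobenius-orthogonal rank-one matrices, giving six terms; otherwise it invokes a simultaneous block-triangularization lemma for matrix pencils (Lemma \ref{lemma 1}, extracted from \cite{KM2015}) so that, after orthogonal transformations, $A$ decomposes into an essentially $(2,3,3)$-tensor---whose orthogonal rank is at most five by a result of \cite{KM2015}---plus a bottom-row part of rank at most two, giving $5+2=7$. You bypass both external ingredients: the inequality $\rk_{\perp}(A)\le\sum_{k}\rk(M_k)$ for an arbitrary orthonormal slicing basis (via the SVD of each slice) is elementary, and your decisive step is the intermediate-value argument showing that the determinant cubic $f(t)=\det\bigl(\sum_l t_l N_l\bigr)$ vanishes at two \emph{orthogonal} unit vectors, which yields $2+2+3=7$ with no case distinction. (Only a cosmetic caveat: when $f\equiv 0$ the function $g$ does not ``change sign,'' but it trivially has zeros, so nothing breaks.) What your approach buys is a self-contained, purely linear-algebraic proof that incidentally also recovers the $2n-1$ bound of \cite{KM2015} for $(n,n,2)$-tensors with odd $n$, since there one singular slice already gives $(n-1)+n$; what the paper's approach buys is brevity, given the literature it builds on. For the lower bound the two arguments are essentially the same: both apply the inequality $\|A\|_2/\|A\|_F\ge 1/\sqrt{\rk_{\perp}(A)}$ to a tensor whose norm ratio is $1/\sqrt{7}$---you instantiate it at the Chebyshev tensor $\Ch_{3,3}$ using \eqref{eq:norms of Cheb} and \eqref{eq: norm of Chebyshev form}, while the paper cites the example underlying the bound of \cite{KuehnPeetre2006} together with \eqref{eq:lower bound ortho rank}.
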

In~\cite{KuehnPeetre2006} it has been shown that $1/\sqrt{7}$ is an upper bound for $\App(\K{R}^3\otimes \K{R}^3\otimes\K{R}^3)$ and conjectured that it is actually the exact value. Due to~\eqref{eq:lower bound ortho rank}, Theorem~\ref{thm:max orthogonal rank} shows that $1/\sqrt{7}$ is also a lower bound and hence proves this conjecture. On the other hand, we see from~\eqref{eq:norms of Cheb} and \eqref{eq: norm of Chebyshev form} that the minimal ratio $1/\sqrt{7}$ can be achieved by symmetric $(3,3,3)$-tensors, in particular by the ones associated with the Chebyshev form $\Ch_{3,3}$. Since the spaces $\txt{Sym}^3(\K{R}^3)$ and $P_{3,3}$ are isometric (with respect to the both norms), Theorem~\ref{thm:ternary cubics} is therefore part of the following corollary of Theorem~\ref{thm:max orthogonal rank}.
\begin{cor}\label{cor:App for ternary cubics}
We have 
\begin{equation}
  \App(\K{R}^3\otimes \K{R}^3\otimes\K{R}^3) = \App(\txt{Sym}^3(\K{R}^3)) = \frac{1}{\sqrt{\max_{A\in \K{R}^3\otimes\K{R}^3\otimes\K{R}^3} \rk_{\perp}(A)}} = \frac{1}{\sqrt{7}}
\end{equation}
and the symmetric tensor corresponding to the Chebyshev cubic $\Ch_{3,3}$ is extremal.
\end{cor}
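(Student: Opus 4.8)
The plan is to pinch the four quantities together as a chain of inequalities, using Theorem~\ref{thm:max orthogonal rank} as the single substantial input and the explicit norm data for $\Ch_{3,3}$ to exhibit a tensor attaining the extremal value. First I would pin down the general tensor space: by Theorem~\ref{thm:max orthogonal rank} the maximal orthogonal rank over all $(3,3,3)$-tensors equals $7$, so $1/\sqrt{\max_{A}\rk_{\perp}(A)}=1/\sqrt{7}$, and the orthogonal-rank lower bound~\eqref{eq:lower bound ortho rank} gives $\App(\K{R}^3\otimes\K{R}^3\otimes\K{R}^3)\ge 1/\sqrt{7}$. For the matching upper bound I would invoke the result of~\cite{KuehnPeetre2006}, where $1/\sqrt{7}$ is shown to be an upper bound for $\App(\K{R}^3\otimes\K{R}^3\otimes\K{R}^3)$; together these force $\App(\K{R}^3\otimes\K{R}^3\otimes\K{R}^3)=1/\sqrt{7}$.

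Next I would transfer this to the symmetric space via the isometry between $\txt{Sym}^3(\K{R}^3)$ and $P_{3,3}$ under which the Frobenius and spectral norms correspond to the Bombieri and uniform norms. By~\eqref{eq:norms of Cheb} and~\eqref{eq: norm of Chebyshev form} the form $\Ch_{3,3}$ satisfies $\|\Ch_{3,3}\|_\infty/\|\Ch_{3,3}\|_B = 1/\sqrt{3\cdot 3 - 2} = 1/\sqrt{7}$, so the associated symmetric tensor is a concrete witness giving $\App(\txt{Sym}^3(\K{R}^3))\le 1/\sqrt{7}$, since $\App$ is the minimum of the ratio over all nonzero symmetric tensors. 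The reverse inequality is supplied for free by the general sandwich $\App(\K{R}^3\otimes\K{R}^3\otimes\K{R}^3)\le\App(\txt{Sym}^3(\K{R}^3))$, which yields $1/\sqrt{7}\le\App(\txt{Sym}^3(\K{R}^3))$. Hence both ratios equal $1/\sqrt{7}$.

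Finally, extremality of $\Ch_{3,3}$ is immediate from the definition: its norm ratio equals $\App(\txt{Sym}^3(\K{R}^3))=1/\sqrt{7}$, so it is a minimizer in~\eqref{eq:ratiointro} for the symmetric space. The only genuine difficulty in the whole argument lies in Theorem~\ref{thm:max orthogonal rank}, which is taken as established; the corollary itself is then mere bookkeeping, checking that the orthogonal-rank lower bound, the upper bound of~\cite{KuehnPeetre2006}, and the Chebyshev witness all meet exactly at $1/\sqrt{7}$, with the symmetric-to-general inequality closing the last gap.
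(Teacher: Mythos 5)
Your proposal is correct and takes essentially the same route as the paper: the lower bound $\App(\K{R}^3\otimes\K{R}^3\otimes\K{R}^3)\ge 1/\sqrt{7}$ from Theorem~\ref{thm:max orthogonal rank} via~\eqref{eq:lower bound ortho rank}, the upper bound from \cite{KuehnPeetre2006} together with the Chebyshev witness $\|\Ch_{3,3}\|_\infty/\|\Ch_{3,3}\|_B=1/\sqrt{7}$, and the inequality $\App(\K{R}^3\otimes\K{R}^3\otimes\K{R}^3)\le\App(\txt{Sym}^3(\K{R}^3))$ to close the chain. Since the two constants coincide, your conclusion that $\Ch_{3,3}$ minimizes the ratio in the symmetric space also gives extremality in the sense of the paper's definition (i.e., in the full tensor space), so the argument is complete.
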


Assume now that $n_1 \le \dots \le n_d$. Then it is not difficult to show that the orthogonal rank of an $(n_1,\dots,n_d)$-tensor is not larger than $n_1\cdots n_{d-1}$. It follows from~\eqref{eq:lower bound ortho rank} that
\begin{equation}\label{eq:naive lower bound}
 \App(\otimes^d_{j=1}\K{R}^{n_j}) \ge \frac{1}{\sqrt{n_1\cdots n_{d-1}}}, \quad n_1 \le \dots \le n_d.
\end{equation}
In~\cite{LNSU2018} the concept of an \emph{orthogonal tensor} is defined by the property that its contraction along the first $d-1$ modes (assuming $n_d$ is the largest dimension) with any $d-1$ vectors of unit length results in a vector of unit length. It is then shown that equality in~\eqref{eq:naive lower bound} is attained if and only if the space contains orthogonal tensors and only those are then the extremal ones. 
Moreover, for $n^d$-tensors this is the case if and only if $n=1,2,4,8$. Therefore, Theorem~\ref{thm:binary} in particular shows that
\[
\App(\txt{Sym}^d(\K{R}^2)) = \frac{1}{\sqrt{2^{d-1}}} = \App(\otimes^d_{j=1}\K{R}^{2}),
\]
and since the symmetric tensors associated to Chebyshev forms attain these constants, they are orthogonal in the sense of~\cite{LNSU2018}. In light of Corollary~\ref{cor:App for ternary cubics} one hence may wonder whether $\App(\txt{Sym}^d(\K{R}^n))$ equals $\App(\otimes^d_{j=1}\K{R}^n)$ in general, or at least in the case $d=3$. Note that this is true for matrices. In general, the answer to this question is, however, negative. 
In the cases $n=4$ and $n=8$ it would imply the existence of symmetric orthogonal tensors, which we show is not possible.
%
%
%
%
%
%
%
%
\begin{propo}\label{propo:n=4,8}
If $A\in \txt{Sym}^d(\K{R}^n)$ is an orthogonal symmetric tensor of order $d\geq 3$, then $n=1$ or $n=2$. For $n=2$ the only such tensors are the ones associated to rotated Chebyshev forms $p=\rho^*\Ch_{d,2}$, $\rho\in O(2)$, that is, are of the form $(\rho,\cdots,\rho) \cdot A$ (see~\eqref{eq:action}) with $A$ given by~\eqref{eq: tensor for Chebyshev}.
\end{propo}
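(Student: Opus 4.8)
The plan is to translate orthogonality into a polynomial identity, reduce general order $d$ to the cubic case by contraction, and then exploit a Clifford-algebra structure together with the \emph{full} symmetry of $A$ to force $n\le 2$. Throughout I identify $A$ with the fully symmetric $d$-linear form it defines, writing $A(v_1,\dots,v_d)$ for its value and $A(v_1,\dots,v_{d-1},\cdot)\in\K{R}^n$ for the vector obtained by contracting all but the last slot. Since the defining condition $\|A(v_1,\dots,v_{d-1},\cdot)\|=1$ holds for all unit $v_i$ and both sides are homogeneous of degree $2$ in each $v_i$, it upgrades to the polynomial identity
\[
\|A(v_1,\dots,v_{d-1},\cdot)\|^2 = \|v_1\|^2\cdots\|v_{d-1}\|^2, \qquad v_1,\dots,v_{d-1}\in\K{R}^n. \tag{$\star$}
\]

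First I would reduce to $d=3$. Fix a unit vector $e$ and contract the last $d-3$ arguments of $A$ against $e$; this yields a symmetric trilinear form $A_e(v_1,v_2,v_3)=A(v_1,v_2,v_3,e,\dots,e)$, that is, a tensor $A_e\in\txt{Sym}^3(\K{R}^n)$. Specializing $(\star)$ to $v_3=\dots=v_{d-1}=e$ gives $\|A_e(v_1,v_2,\cdot)\|=\|v_1\|\,\|v_2\|$, so $A_e$ is a nonzero \emph{orthogonal} symmetric cubic tensor. Hence it suffices to treat $d=3$: ruling out orthogonal symmetric cubic tensors for $n\ge 3$ rules out orthogonal symmetric $d$-tensors for every $d\ge 3$ and $n\ge 3$.

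Now take $d=3$ and define the linear map $M\colon\K{R}^n\to\txt{Sym}(n)$ by $\langle M(e)v,w\rangle=A(e,v,w)$. The matrix $M(e)$ is symmetric because $A$ is symmetric in $v,w$, while symmetry of $A$ in the first two slots gives the extra identity $M(u)v=M(v)u$ for all $u,v$. The identity $(\star)$ for $d=3$ reads $\|M(e)v\|=\|e\|\,\|v\|$, so $M(e)^\top M(e)=\|e\|^2 I$; combined with $M(e)^\top=M(e)$ this yields $M(e)^2=\|e\|^2 I$, whose polarization is the Clifford relation $M(e)M(f)+M(f)M(e)=2\langle e,f\rangle I$. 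Writing $M_i=M(e_i)$ we obtain $M_i^2=I$, $M_iM_j=-M_jM_i$ for $i\ne j$, and $M_ie_j=M_je_i$ for all $i,j$. For three distinct indices $i,j,k$ (which exist precisely when $n\ge 3$), a short chain of rewrites alternating the anticommutation relations with $M_\ell e_m=M_m e_\ell$ yields $M_iM_je_k=-M_iM_je_k$, hence $M_iM_je_k=0$; but $M_iM_j$ is a product of orthogonal matrices, hence invertible, a contradiction. This eliminates $n\ge 3$, leaving $n\in\{1,2\}$.

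Finally, for $n=2$ (the case $n=1$ being trivial) an orthogonal symmetric binary tensor has ratio $\|A\|_2/\|A\|_F=1/\sqrt{2^{d-1}}=\App(\txt{Sym}^d(\K{R}^2))$ by Theorem~\ref{thm:binary}, so it is extremal. For $d\ge 3$ the equality characterization in Theorem~\ref{thm:binary} forces its associated form to be $g^*\Ch_{d,2}$ with $g=(s,\rho)\in CO(2)$, and the normalization imposed by $(\star)$ pins down the scalar $s$ (and sign), leaving exactly the rotated Chebyshev forms $\rho^*\Ch_{d,2}$, $\rho\in O(2)$, as claimed. I expect the main obstacle to be the cubic step: identifying the Clifford structure and, crucially, combining it with the symmetry $M(u)v=M(v)u$ to force the contradiction. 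The Clifford relations alone only give the weaker Hurwitz-type restriction $n\in\{1,2,4,8\}$; it is precisely the extra symmetry coming from $A\in\txt{Sym}^d(\K{R}^n)$ that excludes $n=4,8$.
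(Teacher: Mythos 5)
Your proposal is correct, and its core argument is genuinely different from the paper's. The paper imports from~\cite{LNSU2018} both the norm characterization of orthogonal tensors ($\|A\|_2=1$, $\|A\|_F=\sqrt{n^{d-1}}$) and the Hurwitz-type classification that such tensors exist only for $n=1,2,4,8$; it then handles $n=2$ via Theorem~\ref{thm:binary} (exactly as you do) and excludes $n=4$ (with $n=8$ declared analogous) by an explicit slice computation: normalize so that $e_1\otimes e_1\otimes e_1$ is a best rank-one approximation, pin down the sign pattern of all slices using symmetry and orthogonality, and contract with $(e_1-e_2)/\sqrt{2}$ to produce a non-orthogonal matrix. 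Your route bypasses the classification entirely: from orthogonality and symmetry of each $M(e)$ you get the Clifford relations $M(e)M(f)+M(f)M(e)=2\langle e,f\rangle I$, and the \emph{full} symmetry of $A$ gives the extra identity $M(u)v=M(v)u$; for distinct $i,j,k$ the chain
\begin{equation}
M_iM_je_k \;=\; M_iM_ke_j \;=\; -M_kM_ie_j \;=\; -M_kM_je_i \;=\; M_jM_ke_i \;=\; M_jM_ie_k \;=\; -M_iM_je_k
\end{equation}
(alternating the two kinds of relations, net sign $(-1)^3$) forces $M_iM_je_k=0$, contradicting invertibility of $M_iM_j$. I verified this chain and your reduction from general $d$ to $d=3$ by contracting against a fixed unit vector; both are sound. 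What the paper's proof buys is brevity given the cited results; what yours buys is self-containedness (no existence classification is needed for the impossibility part), a uniform treatment of all $n\geq 3$ rather than separate arguments for $n=4$ and $n=8$, and a transparent algebraic explanation---full symmetry is incompatible with the composition-algebra structure beyond $n=2$---which is exactly the content of your closing remark. Two small points you leave implicit: the $n=2$ endgame uses $\|A\|_2=1$ and $\|A\|_F^2=2^{d-1}$, but both follow in one line from your identity $(\star)$, so you need not invoke~\cite{LNSU2018} there either; and you only prove the inclusion ``orthogonal $\Rightarrow$ rotated Chebyshev,'' while the converse inclusion implicit in the proposition's wording follows from Theorem~\ref{thm:binary} together with the characterization of orthogonal tensors in~\cite{LNSU2018}, as the paper notes in section~\ref{subsec: best rank-one ratio etc}.
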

\begin{cor}\label{cor:n=4,8}
For $d\geq 3$ we have
\begin{equation}
 \App(\otimes^d_{j=1} \K{R}^4) = \frac{1}{\sqrt{4^{d-1}}} < \App(\txt{Sym}^d(\K{R}^4))\quad \txt{and}\quad \App(\otimes^d_{j=1} \K{R}^8) = \frac{1}{\sqrt{8^{d-1}}}< \App(\txt{Sym}^d(\K{R}^8)) .
\end{equation}
\end{cor}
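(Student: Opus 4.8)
The plan is to deduce the corollary from Proposition~\ref{propo:n=4,8} together with the characterization of the extremal tensors attaining the naive lower bound~\eqref{eq:naive lower bound} from~\cite{LNSU2018}. First I would dispose of the two equalities. Since $4,8\in\{1,2,4,8\}$, the spaces $\otimes_{j=1}^d\K{R}^4$ and $\otimes_{j=1}^d\K{R}^8$ contain orthogonal tensors, so by the cited result equality holds in~\eqref{eq:naive lower bound} in these cases, giving
\begin{equation*}
\App(\otimes_{j=1}^d\K{R}^4)=\frac{1}{\sqrt{4^{d-1}}},\qquad \App(\otimes_{j=1}^d\K{R}^8)=\frac{1}{\sqrt{8^{d-1}}}.
\end{equation*}

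For the strict inequalities I would argue by contradiction, treating both $n=4$ and $n=8$ uniformly. One always has the sandwiching $\App(\otimes_{j=1}^d\K{R}^n)\le\App(\txt{Sym}^d(\K{R}^n))$, since the symmetric minimization in~\eqref{eq:ratiointro} runs over a subset of all tensors; combined with the equalities above this gives $\App(\txt{Sym}^d(\K{R}^n))\ge 1/\sqrt{n^{d-1}}$. Suppose this were an equality. Then there would exist a nonzero \emph{symmetric} tensor $A\in\txt{Sym}^d(\K{R}^n)$ attaining the ratio $\|A\|_2/\|A\|_F=1/\sqrt{n^{d-1}}$, that is, attaining the naive bound~\eqref{eq:naive lower bound} within the full space $\otimes_{j=1}^d\K{R}^n$. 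The key point is now the uniqueness clause of~\cite{LNSU2018}: when the naive bound is attained, the minimizers are \emph{exactly} the orthogonal tensors. Hence $A$ would have to be an orthogonal tensor.

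This is where Proposition~\ref{propo:n=4,8} delivers the contradiction: for $d\ge 3$ there are no orthogonal symmetric tensors in $\K{R}^n$ with $n=4$ or $n=8$, so no symmetric $A$ can simultaneously be orthogonal. Therefore the inequality is strict, $\App(\txt{Sym}^d(\K{R}^n))>1/\sqrt{n^{d-1}}$, completing the proof.

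I expect the only genuine content to be Proposition~\ref{propo:n=4,8} itself, which is used here as a black box; the corollary is then a short logical deduction. The one subtlety to handle carefully is to invoke the ``only those are extremal'' half of the~\cite{LNSU2018} characterization rather than merely the existence statement, since it is precisely this that forces a hypothetical symmetric minimizer to be orthogonal. It is also worth noting why the hypothesis $d\ge 3$ is essential: for $d=2$ orthogonal symmetric tensors do exist (they correspond to scalar multiples of orthogonal symmetric matrices, such as the Lorentz quadric), so for matrices one indeed has the equality $\App(\otimes^2\K{R}^n)=\App(\txt{Sym}^2(\K{R}^n))$, and the strictness genuinely relies on the order being at least three.
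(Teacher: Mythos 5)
Your proposal is correct and follows essentially the same route as the paper, which derives the corollary directly from Proposition~\ref{propo:n=4,8} together with the characterization in~\cite{LNSU2018} that equality in~\eqref{eq:naive lower bound} holds exactly when orthogonal tensors exist and that only those are extremal. Your write-up simply makes explicit the steps the paper compresses into ``immediate,'' including the correct emphasis on invoking the ``only those are extremal'' clause to force a hypothetical symmetric minimizer to be orthogonal.
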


The cases of $2^d$- and $(3,3,3)$-tensors are therefore exceptional in the sense that the ``nonsymmetric'' best rank-one approximation ratio can be achieved by symmetric tensors. 

\subsection{Variational characterization and critical tensors}

The problem of determining the best rank-one approximation ratio of a tensor space and finding associated extremal tensors can be seen as a constrained optimization problem for a Lipschitz function. The spectral norm $A \mapsto \| A \|_2$ is a Lipschitz function 
on the normed space $(\otimes^d_{j=1}\K{R}^{n_j},\Vert\cdot\Vert_F)$ (with Lipschitz constant one). The best rank-one approximation ratio $\App(\otimes^d_{j=1}\K{R}^{n_j})$ equals the minimal value of this function on the unit sphere
$\{A\in \otimes^d_{j=1}\K{R}^{n_j}: \Vert A\Vert_F=1\}$ defined by the Frobenius norm,
and extremal tensors (of unit Frobenius norm) are its global minima. Global as well as local minima of a Lipschitz function are among its critical points. The notion of a critical point of a Lipschitz function constrained to a submanifold is explained in section~\ref{sub:gengrad}. It motivates the following terminology. 
\begin{defi}\label{defi:critical}
A nonzero tensor $A\in \otimes^d_{j=1}\K{R}^{n_j}$ is \emph{critical} 
if $A/\Vert A\Vert_F$ is a critical point of the restriction of the spectral norm to the Frobenius sphere, meaning that $\lambda A$ belongs to the generalized gradient of the spectral norm at $A/\Vert A\Vert_F$ for some $\lambda \in \K{R}$. 
\end{defi}
We can then give a characterization of critical $(n_1,\dots,n_d)$-tensors in terms of decompositions of them into their best rank-one approximations.
\begin{thm}\label{thm:varchar}
 A nonzero tensor $A\in \otimes^d_{j=1}\K{R}^{n_j}$ is critical if and only if the rescaled tensor $\Vert A\Vert_2^2/\Vert A\Vert_F^2\, A$ can be written as a convex linear combination of some best rank-one approximations of $A$. 
\end{thm}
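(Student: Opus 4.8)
The plan is to reduce the statement to an explicit description of the generalized gradient of the spectral norm together with the standard correspondence between best rank-one approximations and maximizers in the definition~\eqref{eq:spectral norm} of $\Vert A\Vert_2$. The key observation is that $\Vert\cdot\Vert_2$ is a norm, hence a convex, positively $1$-homogeneous function; therefore its generalized gradient (as defined in section~\ref{sub:gengrad}) coincides with the convex subdifferential. Writing $\Vert A\Vert_2=\max_{X}\langle A,X\rangle_F$ as the support function of the compact, symmetric set $S=\{X:\Vert X\Vert_F=1,\ \rk(X)=1\}$ of unit rank-one tensors, a standard support-function computation gives
\begin{equation*}
\partial\Vert\cdot\Vert_2(A)=\txt{conv}\{X\in S:\langle A,X\rangle_F=\Vert A\Vert_2\},
\end{equation*}
the convex hull of the unit rank-one tensors attaining the maximum. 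First I would establish this formula carefully, using compactness of $S$ to guarantee the maximum is attained and symmetry to ensure $\Vert\cdot\Vert_2$ is indeed a norm.

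Next I would record the elementary link between these maximizers and best rank-one approximations. For a unit rank-one $X$ and $t\in\K{R}$ one has $\Vert A-tX\Vert_F^2=\Vert A\Vert_F^2-2t\langle A,X\rangle_F+t^2$, minimized over $t$ at $t=\langle A,X\rangle_F$ with value $\Vert A\Vert_F^2-\langle A,X\rangle_F^2$. Hence the rank-one tensors closest to $A$ are exactly those of the form $Y=\Vert A\Vert_2\,X$ with $X\in S$ a maximizer, $\langle A,X\rangle_F=\Vert A\Vert_2$; equivalently, $X=Y/\Vert A\Vert_2$ ranges over the maximizers as $Y$ ranges over the best rank-one approximations of $A$.

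It then remains to translate the criticality condition. Since $\Vert\cdot\Vert_2$ is $1$-homogeneous, $\partial\Vert\cdot\Vert_2$ is constant along positive rays, so $\partial\Vert\cdot\Vert_2(A/\Vert A\Vert_F)=\partial\Vert\cdot\Vert_2(A)$, and criticality means $\lambda A\in\partial\Vert\cdot\Vert_2(A)$ for some $\lambda\in\K{R}$. For any subgradient $g$ of a convex $1$-homogeneous function one has the Euler identity $\langle g,A\rangle_F=\Vert A\Vert_2$; applying this to $g=\lambda A$ forces $\lambda=\Vert A\Vert_2/\Vert A\Vert_F^2$. Thus $A$ is critical if and only if
\begin{equation*}
\frac{\Vert A\Vert_2}{\Vert A\Vert_F^2}\,A=\sum_i c_i X_i,\qquad c_i\geq 0,\ \sum_i c_i=1,
\end{equation*}
for some maximizers $X_i\in S$. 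Multiplying by $\Vert A\Vert_2$ and setting $Y_i=\Vert A\Vert_2\,X_i$ turns the right-hand side into a convex combination of best rank-one approximations and the left-hand side into $(\Vert A\Vert_2^2/\Vert A\Vert_F^2)\,A$, which is precisely the claim; the converse is obtained by reversing this rescaling.

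The main obstacle is the first step: rigorously identifying the generalized gradient of the spectral norm with the convex hull of its unit rank-one maximizers. Once convexity of $\Vert\cdot\Vert_2$ is used to replace the generalized gradient by the convex subdifferential, the support-function argument is routine, and the remaining steps are bookkeeping driven by $1$-homogeneity and the Euler identity. I would be careful that the scalar $\lambda$ is pinned down correctly, since the precise factor $\Vert A\Vert_2^2/\Vert A\Vert_F^2$ appearing in the statement is exactly what makes the convex-combination reformulation an equivalence.
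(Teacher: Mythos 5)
Your proof is correct, and its skeleton is the same as the paper's: everything reduces to the identification $\partial\Vert A\Vert_2 = \txt{conv}\left\{X :\ \Vert X\Vert_F=1,\ \rk(X)=1,\ \langle A,X\rangle_F=\Vert A\Vert_2\right\}$ (the paper's \eqref{eq:subdifferential of spectral norm}), after which the passage to best rank-one approximations $Y_\ell=\Vert A\Vert_2\,X_\ell$ via \eqref{eq:app} and the pinning down of the multiplier $\lambda=\Vert A\Vert_2/\Vert A\Vert_F^2$ are identical bookkeeping --- you get $\lambda$ from the Euler identity for subgradients of a $1$-homogeneous convex function, the paper gets it by pairing the decomposition with $A$, which is the same one-line computation. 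The genuine difference lies in how the subdifferential formula is justified. The paper invokes Clarke's theorem on max functions, \cite[Thm.~2.1]{Clarke1975}, quoted as \eqref{eq:general formula for subdifferential} in section~\ref{sub:gengrad}; this requires verifying smoothness hypotheses on the function being maximized, but it applies to max functions that are not convex and is stated once for later reuse. You instead exploit that $\Vert\cdot\Vert_2$ is a norm: the Clarke generalized gradient of a convex function coincides with the convex subdifferential (classical, given the paper's definition of $\partial f$ via limits of gradients this is Rockafellar's theorem on convex functions), and the subdifferential of the support function of the compact set $S$ of unit rank-one tensors is the exposed face of $\txt{conv}(S)$, which equals the convex hull of the exposed points of $S$ since a convex combination with positive weights attains the maximum only if every term does. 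Your route is more elementary and self-contained for this particular function, and your use of positive homogeneity ($\partial\Vert\cdot\Vert_2(cA)=\partial\Vert\cdot\Vert_2(A)$ for $c>0$) lets you avoid normalizing $A$ to the Frobenius sphere, which the paper does instead; both treatments handle the two directions of the equivalence as a single chain of equivalences, so neither has a gap.
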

Specifically, the theorem states that a tensor $A$ is critical if and only if there exists a decomposition
 \begin{equation}\label{eq:expansion}
  \left(\frac{\Vert A\Vert_2}{\,\Vert A\Vert_F}\right)^2 A = \sum_{\ell = 1}^r \alpha_\ell Y_\ell, \quad  \sum_{\ell=1}^r\alpha_\ell = 1,\ \alpha_1,\dots,\alpha_r>0,
 \end{equation}
 where $Y_1,\dots,Y_r$ are best rank-one approximations of $A$.
In particular, if $A\in \otimes^d_{j=1}\K{R}^{n_j}$ is an extremal tensor, then
\begin{equation}\label{eq:expansion2}
  \App(\otimes^d_{j=1}\K{R}^{n_j})^2 \cdot A = \sum_{\ell = 1}^r \alpha_\ell Y_\ell, \quad  \sum_{\ell=1}^r \alpha_\ell=1,\ \alpha_1,\dots,\alpha_r>0
\end{equation}
for some best rank-one approximations $Y_1,\dots, Y_r$ of $A$.

An analogue of Theorem~\ref{thm:varchar} holds for symmetric tensors or, equivalently,
homogeneous forms. Considering the spectral norm as a function on the space $\txt{Sym}^d(\K{R}^n)$ only, it is again a Lipschitz function, and the best rank-one approximation ratio of $\txt{Sym}^d(\K{R}^n)$ equals its minimum value on the Frobenius unit sphere in the space $\txt{Sym}^d(\K{R}^n)$ of symmetric tensors. 
A nonzero symmetric tensor $A\in \txt{Sym}^d(\K{R}^n)$ is called \emph{critical in $\txt{Sym}^d(\K{R}^n)$} 
if the normalized symmetric tensor $A/\Vert A\Vert_F$ is a critical point (see section~\ref{sub:gengrad}) of the restriction of the spectral norm to the Frobenius sphere in the space $\txt{Sym}^d(\K{R}^n)$. We also say that a form $p \in P_{d,n}$ is critical if the associated symmetric tensor is critical in $\txt{Sym}^d(\K{R}^n)$.
\begin{thm}\label{thm:varcharsym}
  A nonzero tensor $A\in\txt{Sym}^d(\K{R}^n)$ is critical in $\txt{Sym}^d(\K{R}^n)$ if and only if the rescaled tensor $\Vert A\Vert_2^2/\Vert A\Vert_F^2\, A$ can be written as a convex linear combination of some symmetric best rank-one approximations of $A$. In this case $A$ is also critical in the space~$\otimes^d_{j=1}\K{R}^{n}$.
\end{thm}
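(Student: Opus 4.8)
The plan is to follow exactly the route used for Theorem~\ref{thm:varchar}, the only genuinely new ingredient being that the restriction of the spectral norm to $\txt{Sym}^d(\K{R}^n)$ admits a \emph{symmetric} max-representation. First I would record that the spectral norm is a genuine norm, hence convex, so that on the linear subspace $\txt{Sym}^d(\K{R}^n)$ its Clarke generalized gradient (in the sense of section~\ref{sub:gengrad}) coincides with the ordinary subdifferential of convex analysis. This reduces the whole statement to identifying the subdifferential $\partial(\Vert\cdot\Vert_2|_{\txt{Sym}^d(\K{R}^n)})(A)$ with a set built from the symmetric best rank-one approximations of $A$.

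The key step is to compute this subdifferential. By the cited fact (subsection~\ref{sub:tensors}) that the spectral norm of a symmetric tensor is unchanged if the maximum in~\eqref{eq:spectral norm} is taken over symmetric rank-one tensors only, for $A\in\txt{Sym}^d(\K{R}^n)$ one has $\Vert A\Vert_2 = \max_{\Vert x\Vert=1,\,\varepsilon=\pm1}\langle A,\varepsilon\,x^{\otimes d}\rangle_F$, a maximum of linear functionals of $A$ over the compact parameter set $\{(x,\varepsilon):\Vert x\Vert=1,\ \varepsilon\in\{\pm1\}\}$. The standard max-formula of convex analysis then yields $\partial(\Vert\cdot\Vert_2|_{\txt{Sym}^d})(A)=\txt{conv}\{\varepsilon\,x^{\otimes d}:\Vert x\Vert=1,\ \langle A,\varepsilon\,x^{\otimes d}\rangle_F=\Vert A\Vert_2\}$. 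Crucially, every such maximizer $\varepsilon\,x^{\otimes d}$ already lies in $\txt{Sym}^d(\K{R}^n)$, so no projection back onto the subspace is needed: this is precisely where Banach's theorem does the work, and it is the point I expect to require the most care, since without it the subdifferential of the restriction would only be the projection of the full subdifferential and its coincidence with the convex hull of the symmetric minimizers would be unclear. Writing the symmetric best rank-one approximations of $A$ as $Y_\ell=\Vert A\Vert_2\,X_\ell$ with $X_\ell=\varepsilon_\ell\,x_\ell^{\otimes d}$ unit-norm maximizers, the subdifferential is the convex hull of the $Y_\ell/\Vert A\Vert_2$.

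With the subdifferential in hand the remaining argument is purely computational and mirrors Theorem~\ref{thm:varchar}. By positive homogeneity of the spectral norm the subdifferential is constant along the ray through $A$, so criticality of $A/\Vert A\Vert_F$ in $\txt{Sym}^d(\K{R}^n)$ means $\lambda A\in\partial(\Vert\cdot\Vert_2|_{\txt{Sym}^d})(A)$ for some $\lambda\in\K{R}$; expanding this membership (and using Carathéodory to make the convex combination finite) gives $\lambda\Vert A\Vert_2\,A=\sum_\ell\beta_\ell Y_\ell$ with $\beta_\ell\ge0$ and $\sum_\ell\beta_\ell=1$. Taking the Frobenius inner product with $A$ and using $\langle Y_\ell,A\rangle_F=\Vert A\Vert_2\langle X_\ell,A\rangle_F=\Vert A\Vert_2^2$ pins down $\lambda=\Vert A\Vert_2/\Vert A\Vert_F^2$, which turns the identity into $(\Vert A\Vert_2/\Vert A\Vert_F)^2 A=\sum_\ell\beta_\ell Y_\ell$; discarding the vanishing weights yields the asserted convex combination of symmetric best rank-one approximations with strictly positive coefficients. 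The converse direction runs the same chain of equalities backwards to recover the membership $\lambda A\in\partial(\Vert\cdot\Vert_2|_{\txt{Sym}^d})(A)$.

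Finally, for the last assertion I would show that a symmetric best rank-one approximation of $A$ is automatically a best rank-one approximation in the full space $\otimes^d_{j=1}\K{R}^n$. Indeed, the minimal rank-one approximation error equals $\sqrt{\Vert A\Vert_F^2-\Vert A\Vert_2^2}$ both when minimizing over all rank-one tensors and when minimizing over symmetric rank-one tensors, because by Banach's theorem the two spectral norms of the symmetric tensor $A$ agree; since symmetric rank-one tensors form a subset of all rank-one tensors, any minimizer over the former is a minimizer over the latter. Hence the $Y_\ell$ are best rank-one approximations of $A$ in $\otimes^d_{j=1}\K{R}^n$, and the decomposition $(\Vert A\Vert_2/\Vert A\Vert_F)^2 A=\sum_\ell\beta_\ell Y_\ell$ is exactly of the form~\eqref{eq:expansion}. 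Theorem~\ref{thm:varchar} then implies that $A$ is critical in $\otimes^d_{j=1}\K{R}^n$, which completes the proof.
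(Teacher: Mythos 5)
Your proposal is correct and takes essentially the same route as the paper: the paper likewise applies the max-function formula for the generalized gradient (Clarke's theorem, which for the convex spectral norm agrees with your convex-analysis subdifferential) to the restriction of $\Vert\cdot\Vert_2$ to $\txt{Sym}^d(\K{R}^n)$, with Banach's theorem supplying the crucial fact that the maximum can be taken over symmetric rank-one tensors so that the subdifferential is the convex hull of symmetric maximizers, and then determines the multiplier by pairing with $A$. The final assertion is also handled in the paper exactly as you do, by observing via Banach's theorem that symmetric best rank-one approximations are best rank-one approximations in $\otimes^d_{j=1}\K{R}^n$ and invoking Theorem~\ref{thm:varchar}.
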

Here the second statement follows immediately from Theorem~\ref{thm:varchar} and the fact that a best rank-one approximation of a symmetric tensor can always be chosen to be symmetric due to Banach's result \cite{Banach}; see section~\ref{sub:tensors}. However, if $A\in \txt{Sym}^d(\K{R}^n)$ is an extremal symmetric tensor, then, by Theorem \ref{thm:varcharsym},
\begin{equation}\label{eq:expansionsym2}
  \App(\txt{Sym}^d(\K{R}^n))^2 \cdot A = \sum_{\ell = 1}^r \alpha_\ell Y_\ell, \quad  \sum_{\ell=1}^r \alpha_\ell=1,\ \alpha_1,\dots,\alpha_r>0
\end{equation}
for some symmetric best rank-one approximations $Y_1,\dots, Y_r$ of $A$, and $A$ is critical in $\otimes^d_{j=1}\K{R}^{n_j}$. But in general $A$ is not extremal in $\otimes^d_{j=1}\K{R}^{n_j}$ as discussed at the end of the previous subsection. 

Theorems \ref{thm:varchar} and \ref{thm:varcharsym} combined with Proposition \ref{prop:rank} from section \ref{sub:ratio} imply that extremal tensors must have several best rank-one approximations.
\begin{cor}\label{cor:manybest}
Let $d\geq 2$. Then any extremal tensor in $\otimes^d_{j=1}\K{R}^n$ has at least $n$ distinct best rank-one approximations. Similarly, any extremal symmetric tensor in $\txt{Sym}^d(\K{R}^n)$ has at least $n$ distinct symmetric best rank-one approximations. 
\end{cor}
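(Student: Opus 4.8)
The plan is to read off the number of best rank-one approximations directly from the variational decomposition of an extremal tensor and to bound it below by the rank supplied by Proposition~\ref{prop:rank}. The point is that the expansion in~\eqref{eq:expansion2} writes a nonzero scalar multiple of an extremal tensor as a sum of finitely many rank-one tensors, so the number of summands can never drop below the rank of the tensor.

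First I would fix an extremal tensor $A\in\otimes^d_{j=1}\K{R}^n$. Being a global minimizer of the spectral-to-Frobenius ratio on the unit Frobenius sphere, $A$ is in particular critical, so Theorem~\ref{thm:varchar} yields a representation
\[
\App(\otimes^d_{j=1}\K{R}^n)^2\cdot A=\sum_{\ell=1}^r\alpha_\ell Y_\ell,\qquad \alpha_\ell>0,\ \sum_{\ell=1}^r\alpha_\ell=1,
\]
with best rank-one approximations $Y_1,\dots,Y_r$ of $A$. Merging any coinciding terms (which only shrinks $r$ and keeps the coefficients positive) I may assume the $Y_\ell$ are pairwise distinct. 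Since the left-hand side is a nonzero multiple of $A$ and the right-hand side is a sum of $r$ rank-one tensors, we get $\rk(A)\le r$. By Proposition~\ref{prop:rank} an extremal tensor has rank at least $n$, hence $r\ge\rk(A)\ge n$: the distinct best rank-one approximations $Y_1,\dots,Y_r$ already number at least $n$, proving the first assertion.

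The symmetric statement is obtained by the identical argument with Theorem~\ref{thm:varcharsym} in place of Theorem~\ref{thm:varchar}. An extremal symmetric tensor is critical in $\txt{Sym}^d(\K{R}^n)$, so~\eqref{eq:expansionsym2} expresses a multiple of $A$ as a convex combination of distinct \emph{symmetric} best rank-one approximations $Y_1,\dots,Y_r$; as $\rk(A)\le r$ and Proposition~\ref{prop:rank} forces $\rk(A)\ge n$, one again gets $r\ge n$. The argument is short because its entire substance is imported from Theorems~\ref{thm:varchar} and~\ref{thm:varcharsym} and from Proposition~\ref{prop:rank}; the only thing to watch, and the mild obstacle, is the bookkeeping of distinctness of the terms together with checking that the rank lower bound of Proposition~\ref{prop:rank} is available in the form needed in each of the two settings (tensor rank for $\otimes^d_{j=1}\K{R}^n$, and the fact that symmetric rank dominates tensor rank for $\txt{Sym}^d(\K{R}^n)$).
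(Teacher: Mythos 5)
Your proof is correct and takes essentially the same route as the paper: both combine the convex expansions \eqref{eq:expansion2} and \eqref{eq:expansionsym2} from Theorems~\ref{thm:varchar} and~\ref{thm:varcharsym} with the rank bound of Proposition~\ref{prop:rank} to conclude that the number of terms, hence of distinct best rank-one approximations, is at least $n$. One small correction: in the symmetric case the comparison should run through the \emph{symmetric} rank directly (the expansion \eqref{eq:expansionsym2} is into symmetric rank-one tensors, so $r$ bounds the symmetric rank, which Proposition~\ref{prop:rank} bounds below by $n$); your aside that ``symmetric rank dominates tensor rank'' is not needed and, if used to infer a lower bound on the tensor rank, would point the inequality in the wrong direction.
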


Below we give an alternative characterization of critical tensors in terms of their nuclear norm. The nuclear norm of a $(n_1,\dots,n_d)$-tensor $A\in \otimes^d_{j=1}\K{R}^{n_j}$ is defined by
\begin{equation}\label{eq:prenuclear}
  \Vert A\Vert_* = \inf \left\{\sum\limits_{\ell=1}^r \Vert Y_\ell\Vert_F:\ A=\sum\limits_{\ell=1}^r Y_\ell, \ r\in \K{N},\, \rk(Y_\ell)=1, \ell=1,\dots,r\right\}.
\end{equation}
It is a result of Friedland and Lim~\cite{FriedlandLim} that for a symmetric tensor $A\in \txt{Sym}^d(\K{R}^n)$ it is enough to take the infimum in \eqref{eq:prenuclear} over symmetric rank-one tensors only. 
Hence the nuclear norm of a symmetric tensor can be defined intrinsically in the space $\txt{Sym}^d(\K{R}^n)$. In either case, the infimum in \eqref{eq:prenuclear} is attained.

Nuclear and spectral norms are dual to each other (see subsection \ref{sub:tensors}) and for any tensor $A\in \otimes^d_{j=1}\K{R}^{n_j}$ it holds that
\begin{equation}\label{eq:F2*}
  \Vert A\Vert_F^2\leq \Vert A\Vert_2\Vert A\Vert_*.
\end{equation}
Our next result characterizes tensors achieving equality in \eqref{eq:F2*}.
\begin{thm}\label{thm:criteria}
The following two properties are equivalent for a nonzero tensor $A$ in $\otimes^d_{j=1}\K{R}^{n_j}$ or $\txt{Sym}^d(\K{R}^n)$:
\begin{itemize}
\item[\upshape (i)] $A$ is critical,
\item[\upshape (ii)] $\| A \|_2 \| A \|_* = \| A \|_F^2$.
\end{itemize}
\end{thm}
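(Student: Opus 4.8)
The plan is to derive the equivalence from the variational characterization in Theorem~\ref{thm:varchar} (and its symmetric counterpart Theorem~\ref{thm:varcharsym}), the duality inequality~\eqref{eq:F2*}, and one elementary fact about best rank-one approximations: a rank-one tensor $Y$ is a best rank-one approximation of $A$ if and only if $Y=\Vert A\Vert_2\,U$ for some unit-norm rank-one tensor $U$ with $\langle A,U\rangle_F=\Vert A\Vert_2$; in particular every best rank-one approximation satisfies $\Vert Y\Vert_F=\Vert A\Vert_2$ and $\langle A,Y\rangle_F=\Vert A\Vert_2^2$. This follows by minimizing $\Vert A-\sigma U\Vert_F^2=\Vert A\Vert_F^2-2\sigma\langle A,U\rangle_F+\sigma^2$ jointly over the scale $\sigma$ and the unit rank-one direction $U$, which reduces to maximizing $\langle A,U\rangle_F$, i.e.\ to the definition~\eqref{eq:spectral norm} of the spectral norm.

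To prove (i)$\Rightarrow$(ii), I would start from the decomposition provided by Theorem~\ref{thm:varchar}, namely $(\Vert A\Vert_2/\Vert A\Vert_F)^2 A=\sum_{\ell}\alpha_\ell Y_\ell$ with $\alpha_\ell>0$, $\sum_\ell\alpha_\ell=1$, and $Y_\ell$ best rank-one approximations. Rearranging gives a rank-one decomposition $A=\sum_\ell \beta_\ell Y_\ell$ with $\beta_\ell=(\Vert A\Vert_F/\Vert A\Vert_2)^2\alpha_\ell$. Since $\Vert Y_\ell\Vert_F=\Vert A\Vert_2$, the sum of the Frobenius norms of the terms equals $\sum_\ell\beta_\ell\Vert A\Vert_2=(\Vert A\Vert_F^2/\Vert A\Vert_2)\sum_\ell\alpha_\ell=\Vert A\Vert_F^2/\Vert A\Vert_2$. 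This exhibits an admissible rank-one decomposition certifying $\Vert A\Vert_*\le \Vert A\Vert_F^2/\Vert A\Vert_2$, i.e.\ $\Vert A\Vert_2\Vert A\Vert_*\le \Vert A\Vert_F^2$, which together with~\eqref{eq:F2*} forces equality.

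For (ii)$\Rightarrow$(i), I would use that the infimum defining $\Vert A\Vert_*$ is attained, so $A=\sum_\ell Z_\ell$ with $Z_\ell$ rank-one and $\sum_\ell\Vert Z_\ell\Vert_F=\Vert A\Vert_*$; after discarding any zero terms, expanding $\Vert A\Vert_F^2=\sum_\ell\langle A,Z_\ell\rangle_F$ and bounding each summand by $\langle A,Z_\ell\rangle_F\le \Vert A\Vert_2\Vert Z_\ell\Vert_F$ yields $\Vert A\Vert_F^2\le\Vert A\Vert_2\sum_\ell\Vert Z_\ell\Vert_F=\Vert A\Vert_2\Vert A\Vert_*=\Vert A\Vert_F^2$ by hypothesis. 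Hence every inequality is tight, so $\langle A,\,Z_\ell/\Vert Z_\ell\Vert_F\rangle_F=\Vert A\Vert_2$ for each $\ell$, meaning $Y_\ell:=\Vert A\Vert_2\,Z_\ell/\Vert Z_\ell\Vert_F$ is a best rank-one approximation and $Z_\ell=(\Vert Z_\ell\Vert_F/\Vert A\Vert_2)Y_\ell$. Substituting back gives $A=\sum_\ell(\Vert Z_\ell\Vert_F/\Vert A\Vert_2)Y_\ell$ with $\sum_\ell\Vert Z_\ell\Vert_F/\Vert A\Vert_2=\Vert A\Vert_*/\Vert A\Vert_2=\Vert A\Vert_F^2/\Vert A\Vert_2^2$; multiplying through by $\Vert A\Vert_2^2/\Vert A\Vert_F^2$ produces exactly a convex combination of best rank-one approximations equal to $(\Vert A\Vert_2/\Vert A\Vert_F)^2A$, whence $A$ is critical by Theorem~\ref{thm:varchar}.

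The step requiring the most care is the equality analysis in (ii)$\Rightarrow$(i): one must verify that tightness of $\langle A,Z_\ell\rangle_F\le\Vert A\Vert_2\Vert Z_\ell\Vert_F$ genuinely yields the positive value $+\Vert A\Vert_2$ (so that each $Y_\ell$ is a bona fide best rank-one approximation and the resulting coefficients are strictly positive), and that zero terms may be removed without affecting an optimal nuclear decomposition. The symmetric statement is obtained by the identical argument, replacing Theorem~\ref{thm:varchar} with Theorem~\ref{thm:varcharsym} and invoking that both the nuclear norm (Friedland--Lim) and the best rank-one approximations (Banach) can be realized with symmetric rank-one tensors.
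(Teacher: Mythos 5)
Your proposal is correct and follows essentially the same route as the paper: the key in both directions is the characterization of critical tensors from Theorems~\ref{thm:varchar} and~\ref{thm:varcharsym}, and your (ii)$\Rightarrow$(i) argument --- taking an attained (symmetric, if applicable) nuclear decomposition, pairing with $A$, and forcing tightness of $\langle A, Z_\ell\rangle_F \le \|A\|_2\|Z_\ell\|_F$ term by term, which in particular settles the sign issue you flagged --- is exactly the paper's proof. The only variation is in (i)$\Rightarrow$(ii): you obtain the converse inequality $\|A\|_2\|A\|_* \le \|A\|_F^2$ directly from the definition~\eqref{eq:prenuclear}, observing that the rescaled critical decomposition is itself an admissible rank-one decomposition with total Frobenius mass $\|A\|_F^2/\|A\|_2$, whereas the paper derives the same bound by pairing the decomposition with a dual witness $A^*$ from~\eqref{eq:duality}; your version is marginally more elementary and equally valid.
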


We remark that the fact that extremal tensors achieve equality in \eqref{eq:F2*} has been already proven in \cite[Theorems~2.2 and~3.1]{DFLW2017}.

\subsection{Decomposition of Chebyshev forms}

For symmetric tensors, the statement of Theorem~\ref{thm:varcharsym} can be reinterpreted in terms of homogeneous forms. Note that a symmetric rank-one tensor $Y = \lambda\, y \otimes \dots \otimes y$, $\lambda\in \K{R}$, $\Vert y\Vert=1$, is a symmetric best rank-one approximation to the symmetric tensor associated to a homogeneous form $p$ if and only if 
\begin{equation}\label{eq: conditions for forms}
\lambda=p(y)=\pm \Vert p\Vert_\infty.
\end{equation}
Also, by~\eqref{eq: power of linear form}, the homogeneous form associated to such a rank-one tensor is proportional to the $d$th power of a linear form,
\begin{equation}
p_{Y}(x) =  \lambda \langle y,x\rangle^d = \lambda (y_1 x_1 + \dots + y_n x_n )^d.
\end{equation}
Therefore, in analogy to~\eqref{eq:expansion}, Theorem~\ref{thm:varcharsym} states that a form $p \in P_{d,n}$ is critical for the ratio $\| p \|_\infty / \| p \|_B$ if and only if it can be written as
\begin{equation}\label{eq: critical decomposition for forms}
\left( \frac{\,\| p \|_\infty}{\| p \|_B} \right)^2 p(x) = \sum_{\ell=1}^r  \alpha_\ell \lambda_\ell \langle y^\ell,x\rangle^d, \quad \sum_{\ell=1}^r \alpha_\ell = 1, \ \alpha_1,\dots,\alpha_r > 0,
\end{equation}
where $\lambda_i\in \K{R}$ and $y^i\in \K{R}^n$, $\Vert y^i\Vert=1$, satisfy ~\eqref{eq: conditions for forms} for $i=1,\dots, r$.

From Theorem \ref{thm:binary} we know that the binary Chebyshev forms $\Ch_{d,2}$ are extremal in $P_{2,d}$ and therefore they must admit a decomposition like~\eqref{eq: critical decomposition for forms}. In Theorem~\ref{prop:decbinary} we provide such a decomposition. For $k=0,\dots, d-1$ denote $\theta_k = \pi k /d$ and $a_k = \cos(\theta_k)$, $b_k = \sin(\theta_k)$. Then $a_k+i b_k= e^{i\theta_k}$, $k=0,\dots, d-1$, are $2d$th roots of unity.

\begin{thm}\label{prop:decbinary}
  For any $d\geq 1$ we have
\begin{equation}\label{eq:dec1}
  \frac{1}{2^{d-1}}\Ch_{d,2}(x_1,x_2) = \frac{1}{d} \sum\limits_{k=0}^{d-1} (-1)^k \left(x_1 a_k + x_2 b_k\right)^d 
\end{equation}
or, in polar coordinates,
\begin{equation}\label{eq:dec2}
 \frac{1}{2^{d-1}}\Ch_{d,2}(\cos\theta,\sin\theta)=  \frac{1}{2^{d-1}}\cos(d\theta) = \frac{1}{d}\sum_{k=0}^{d-1} (-1)^k\cos(\theta-\theta_k)^d.
\end{equation}
\end{thm}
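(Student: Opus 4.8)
The plan is to verify the polynomial identity \eqref{eq:dec1} directly by expanding its right-hand side and applying a root-of-unity filter, after which the polar form \eqref{eq:dec2} is an immediate substitution. First I would introduce the complex variable $z = x_1 + i x_2$, so that $\bar z = x_1 - i x_2$ for real $x_1,x_2$. Since $a_k + i b_k = e^{i\theta_k}$, each linear form appearing on the right is the real part of $\bar z\, e^{i\theta_k}$, and one checks that
\begin{equation*}
x_1 a_k + x_2 b_k = x_1\cos\theta_k + x_2\sin\theta_k = \tfrac{1}{2}\bigl(\bar z\, e^{i\theta_k} + z\, e^{-i\theta_k}\bigr),
\end{equation*}
which puts the $d$-th power in a shape amenable to the binomial theorem.

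Next I would expand $(x_1 a_k + x_2 b_k)^d = 2^{-d}\sum_{j=0}^d \binom{d}{j}\bar z^{\,j} z^{\,d-j}\, e^{i\theta_k(2j-d)}$, multiply by $(-1)^k$, and sum over $k = 0,\dots,d-1$. The crucial simplification comes from absorbing the sign into the exponential: since $\theta_k = \pi k/d$ and $(-1)^k = e^{i\pi k}$, the phase collapses to $(-1)^k e^{i\theta_k(2j-d)} = e^{2\pi i jk/d}$. The inner sum over $k$ then becomes the geometric sum $\sum_{k=0}^{d-1} e^{2\pi i jk/d}$, which equals $d$ when $j \equiv 0 \pmod d$ and vanishes otherwise.

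For $j \in \{0,1,\dots,d\}$ only the extreme indices $j=0$ and $j=d$ survive, so the double sum reduces to $2^{-d}\, d\,(z^d + \bar z^d)$. Recalling from \eqref{eq:Chebyshev binary} that $z^d + \bar z^d = (x_1+ix_2)^d + (x_1-ix_2)^d = 2\,\Ch_{d,2}(x_1,x_2)$, and dividing by $d$, I obtain exactly $2^{-(d-1)}\Ch_{d,2}(x_1,x_2)$, which is \eqref{eq:dec1}. The polar identity \eqref{eq:dec2} then follows by setting $x_1 = \cos\theta$, $x_2 = \sin\theta$, using $x_1 a_k + x_2 b_k = \cos(\theta - \theta_k)$ and the relation $\Ch_{d,2}(\cos\theta,\sin\theta) = \cos(d\theta)$ recorded after \eqref{eq:Chebyshev binary}.

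The computation is essentially mechanical, so the only delicate point — and the place where both the choice of angles $\theta_k = \pi k/d$ and the alternating signs $(-1)^k$ are essential — is the phase cancellation $(-1)^k e^{i\theta_k(2j-d)} = e^{2\pi i jk/d}$ that converts the $k$-sum into a clean root-of-unity filter. Without the factor $(-1)^k$ one would instead be summing $2d$-th roots of unity, and additional unwanted terms would survive; it is precisely the alternating sign that restricts the surviving contributions to the two extreme monomials $z^d$ and $\bar z^d$.
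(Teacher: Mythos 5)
Your proof is correct, and it takes a genuinely more direct route than the paper. You expand the right-hand side of \eqref{eq:dec1} via the binomial theorem in the complex variables $z = x_1+ix_2$, $\bar z = x_1 - ix_2$, and observe that the alternating sign merges with the phases, $(-1)^k e^{i\theta_k(2j-d)} = e^{2\pi i jk/d}$, so the $k$-sum becomes a root-of-unity filter that annihilates every term except $j=0$ and $j=d$, leaving $2^{-d}d\,(z^d+\bar z^d)$; the identity then drops out, and \eqref{eq:dec2} follows by substitution. The paper instead works in the opposite direction: it starts from $\cos(d\theta) = \mathrm{Re}\left((-1)^k e^{id(\theta-\theta_k)}\right)$, expands for each $k$, averages over $k$, and then must establish the auxiliary vanishing identity
\begin{equation}
\sum_{k=0}^{d-1}(-1)^k\cos(\theta-\theta_k)^{d-2s}=0,\qquad s=1,\dots,[d/2],
\end{equation}
which it proves by re-expanding powers of cosine in the Chebyshev basis $T_{d-2j}$ before applying the same kind of geometric-sum cancellation, and it additionally invokes $\sum_{\ell}\binom{d}{2\ell}=2^{d-1}$. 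Both arguments ultimately rest on the same phase cancellation over $2d$th roots of unity, but your version collapses the paper's two-stage reduction (averaging plus vanishing lemma plus basis argument) into a single self-contained computation; what the paper's route buys is that it proves the trigonometric identity \eqref{eq:dec2} pointwise first and reads off \eqref{eq:dec1} from it, whereas you prove the polynomial identity first and specialize. Your closing remark about what fails without the factor $(-1)^k$ is correct in spirit (for odd $d$ every term of the geometric sum survives; for even $d$ the middle monomial $j=d/2$ does), though it is only motivation and not needed for the proof.
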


The second equality in~\eqref{eq:dec2} constitutes an interesting trigonometric identity, which we were not able to find in the literature.

In the following corollary of Theorem~\ref{prop:decbinary} we provide a decomposition~\eqref{eq: critical decomposition for forms} for cubic Chebyshev forms $\Ch_{3,n}$, which shows that they are critical in $P_{3,n}$.

\begin{cor}\label{cor:deccubic} For $n\geq 2$ we have
  \begin{equation}\label{eq:Ch3n}
    \frac{1}{3n-2}\Ch_{3,n}(x) = \left(\frac{n+2}{9n-6}\right) x_1^3 + \frac{4}{9n-6}\sum_{i=2}^n -\left( \frac{x_1 + \sqrt{3}x_i}{2}\right)^3 +  \left(\frac{-x_1 +\sqrt{3}x_i}{2}\right)^3.
  \end{equation}
In particular, $\Ch_{3,n}$, $n\geq 2$, is critical for the ratio $\Vert p \Vert_\infty/\Vert p\Vert_B$, $p\in P_{3,n}$.
\end{cor}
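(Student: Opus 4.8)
The plan is to obtain the identity~\eqref{eq:Ch3n} directly from the case $d=3$ of Theorem~\ref{prop:decbinary} and then to recognize that it has precisely the shape of a critical decomposition~\eqref{eq: critical decomposition for forms}. First I would specialize~\eqref{eq:dec1} to $d=3$, where $\theta_0=0$, $\theta_1=\pi/3$, $\theta_2=2\pi/3$ give $(a_0,b_0)=(1,0)$, $(a_1,b_1)=(\tfrac12,\tfrac{\sqrt3}2)$, $(a_2,b_2)=(-\tfrac12,\tfrac{\sqrt3}2)$, so that
\[
\Ch_{3,2}(x_1,x_2)=\tfrac{4}{3}\Bigl[\,x_1^3-\bigl(\tfrac{x_1+\sqrt3\,x_2}{2}\bigr)^3+\bigl(\tfrac{-x_1+\sqrt3\,x_2}{2}\bigr)^3\Bigr].
\]
The key observation is then that the cubic Chebyshev form~\eqref{eq: Chebyshev cubics} splits into binary pieces along each coordinate axis,
\[
\Ch_{3,n}(x)=x_1^3-3x_1\sum_{i=2}^n x_i^2=\sum_{i=2}^n \Ch_{3,2}(x_1,x_i)-(n-2)\,x_1^3,
\]
which follows from the expansion $\sum_{i=2}^n\Ch_{3,2}(x_1,x_i)=(n-1)x_1^3-3x_1\sum_{i=2}^n x_i^2$.

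Substituting the binary decomposition into this sum is the second step. The $(n-1)$ copies of $\tfrac43 x_1^3$ coming from the terms $\Ch_{3,2}(x_1,x_i)$ combine with the leftover $-(n-2)x_1^3$ to give the coefficient $\tfrac{4(n-1)}{3}-(n-2)=\tfrac{n+2}{3}$ in front of $x_1^3$, while the remaining cubes of linear forms are reproduced unchanged. Dividing the resulting identity by $\|\Ch_{3,n}\|_B^2=3n-2$ (using~\eqref{eq: norm of Chebyshev form}) and noting $3(3n-2)=9n-6$ yields exactly~\eqref{eq:Ch3n}. This part is a routine bookkeeping calculation with no real obstacle.

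It remains to read~\eqref{eq:Ch3n} as a critical decomposition. I would extract the unit vectors $y^0=e_1$ and $y^{\pm}_i=\tfrac12(\pm e_1+\sqrt3\,e_i)$ for $i=2,\dots,n$, each of which satisfies $\|y\|=1$, and rewrite the right-hand side of~\eqref{eq:Ch3n} in the form $\sum_\ell \alpha_\ell\lambda_\ell\langle y^\ell,x\rangle^3$. Evaluating the form at these vectors gives $\Ch_{3,n}(e_1)=1$, $\Ch_{3,n}(y^+_i)=-1$, and $\Ch_{3,n}(y^-_i)=1$, so in every case $\lambda_\ell=\Ch_{3,n}(y^\ell)=\pm1=\pm\|\Ch_{3,n}\|_\infty$; by~\eqref{eq: conditions for forms} each $\lambda_\ell\langle y^\ell,x\rangle^3$ is then a symmetric best rank-one approximation of $\Ch_{3,n}$. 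The associated weights are $\alpha_0=\tfrac{n+2}{9n-6}$ and $\alpha^{\pm}_i=\tfrac{4}{9n-6}$, all strictly positive for $n\geq2$, and they sum to $\tfrac{(n+2)+2(n-1)\cdot4}{9n-6}=\tfrac{9n-6}{9n-6}=1$. Hence~\eqref{eq:Ch3n} is precisely a decomposition of type~\eqref{eq: critical decomposition for forms}, and Theorem~\ref{thm:varcharsym} then shows that $\Ch_{3,n}$ is critical. The only point requiring genuine care, rather than mechanical algebra, is verifying the sign pattern of the values $\Ch_{3,n}(y^\ell)$ and confirming that they equal $\pm\|\Ch_{3,n}\|_\infty$ exactly, since this is what certifies the rank-one terms as best approximations and thereby turns the algebraic identity into a proof of criticality.
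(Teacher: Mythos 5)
Your proof is correct and follows essentially the same route as the paper: specialize Theorem~\ref{prop:decbinary} to $d=3$, split $\Ch_{3,n}(x)=\sum_{i=2}^n \Ch_{3,2}(x_1,x_i)-(n-2)x_1^3$, recombine the $x_1^3$ coefficients, and rescale by $\Vert\Ch_{3,n}\Vert_B^2=3n-2$; your explicit check of criticality (the values $\pm 1=\pm\Vert\Ch_{3,n}\Vert_\infty$ at the unit vectors $e_1$, $\tfrac12(\pm e_1+\sqrt{3}e_i)$ and the weights summing to one) is exactly the reading via \eqref{eq: critical decomposition for forms} and Theorem~\ref{thm:varcharsym} that the paper leaves implicit. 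Incidentally, your binary decomposition with $(x_1+\sqrt{3}x_2)/2$ inside the first cube is the sign-consistent one (it matches \eqref{eq:dec1} and the statement \eqref{eq:Ch3n}), whereas the paper's intermediate display writes $(x_1-\sqrt{3}x_2)/2$, a harmless typo.
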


In section \ref{sub:localoptimality} we use Corollary \ref{cor:deccubic} to prove Theorem \ref{thm:locmin}, that is, that $\Ch_{3,n}$ is a local minimum for the norm ratio.

It is interesting to note that a decomposition of $\Ch_{3,n}$ or, more precisely, of its representing symmetric tensor, into nonsymmetric best rank-one approximations is trivially obtained. By~\eqref{eq: Chebyshev cubics}, the associated symmetric tensor is
\begin{equation}\label{eq:non-symmetric decomposition}
A_n = e_1 \otimes e_1 \otimes e_1 - \sum_{k=2}^n (e_1 \otimes e_k \otimes e_k + e_k \otimes e_1 \otimes e_k + e_k \otimes e_k \otimes e_1),
\end{equation}
where $e_1,\dots, e_n$ denote the basic unit vectors in $\K{R}^n$. Since $\|A_n \|_2 = 1$ by~\eqref{eq: norm of Chebyshev form}, this ``decomposition into entries'' is a decomposition into best rank-one approximations with equal weights. Scaling by $\| A_n \|^2_2 / \| A_n \|_F^2 = 1/(3n-2)$ provides a desired convex decomposition~\eqref{eq:expansion}. While this proves that $A_n$ is critical in $\K{R}^n \otimes \K{R}^n \otimes \K{R}^n$ (see Theorem~\ref{thm:varchar}), it does not imply by itself that $A_n$ is critical in $\txt{Sym}^3(\K{R}^n)$. Thus Corollary~\ref{cor:deccubic} is a stronger statement. Observe also that~\eqref{eq:non-symmetric decomposition} is a decomposition into pairwise orthogonal rank-one tensors. This together with \eqref{eq: norm of Chebyshev form} and \eqref{eq:lower bound ortho rank} shows that the tensor $A_n$ associated with the cubic Chebyshev form $\Ch_{3,n}$ has orthogonal rank $3n - 2$.

\section{Preliminaries}\label{sec: preliminaries}

In this section we gather some basic definitions and preliminary results upon which we base our arguments for proving the main results in section~\ref{sec:proofs}.

\subsection{Tensors, forms, and their norms}\label{sub:tensors}\label{subsec: tensors and forms}


The space of $(n_1,\dots,n_d)$-tensors is isomorphic to the space of multilinear maps on $\K{R}^{n_1}\times\dots\times \K{R}^{n_d}$. The map associated to a tensor $A$ is given by
\begin{equation}\label{eq:isomorphy with multilinear form}
(x^{(1)},\dots,x^{(d)}) \mapsto  \langle A, x^{(1)}\otimes \dots\otimes x^{(d)}\rangle_F= \sum\limits_{i_1,\dots,i_d=1}^{n_1,\dots,n_d} a_{i_1\dots i_d} x^{(1)}_{i_1} \dots x^{(d)}_{i_d}. 
\end{equation}
The spectral norm \eqref{eq:spectral norm} of $A$
equals the uniform norm of the restriction of the associated multilinear map to the product of unit spheres in $\K{R}^{n_1}\times\dots\times\K{R}^{n_d}$.

As for the nuclear norm defined in~\eqref{eq:prenuclear}, it can be shown that the infimum is always attained (see \cite[Prop. $3.1$]{FriedlandLim})
and a decomposition $A=\sum_{\ell=1}^r X_\ell$ of $A$ into rank-one tensors such that $\Vert A\Vert_* = \sum_{\ell=1}^r \Vert X_\ell\Vert_F$ is called \emph{a nuclear decomposition}. 
We have already stated that 
the spectral and the nuclear norms are dual to each other, that is,
\begin{equation}\label{eq:duality}
\Vert A\Vert_2 = \max\limits_{\Vert A^{\prime}\Vert_*\leq 1} \abs{\langle A,A^{\prime}\rangle_F},\quad
  \Vert A\Vert_* = \max\limits_{\Vert A^{\prime}\Vert_2\leq 1} \abs{\langle A,A^{\prime}\rangle_F},
\end{equation}
and the three above introduced norms satisfy
\begin{equation}\label{eq:relations}
  \Vert A\Vert_2\leq \Vert A\Vert_F, \quad \Vert A\Vert_F\leq \Vert A\Vert_*,\quad \txt{and}\quad \Vert A\Vert_F^2\leq \Vert A\Vert_2\Vert A\Vert_*.
\end{equation}
Moreover, in the first two inequalities in \eqref{eq:relations} equality holds if and only if $A$ is a rank-one tensor. We refer to~\cite{CKP1992, FriedlandLim} for these statements.

The product of orthogonal groups \mbox{$O(n_1,\dots,n_d) = O(n_1)\times\dots\times O(n_d)$}, whose elements are denoted $(\rho^{(1)},\dots,\rho^{(d)})$, acts on the space $\otimes^d_{j=1}\K{R}^{n_j}$ as
\begin{equation}\label{eq:action}
(\rho^{(1)},\dots,\rho^{(d)}) \cdot A = \left(\sum\limits_{j_1,\dots,j_d=1}^{n_1,\dots,n_d} \rho^{(1)}_{i_1j_1}\dots\rho^{(d)}_{i_dj_d} a_{j_1\dots j_d}\right),
\end{equation}
preserving the Frobenius inner product and the spectral and the nuclear norms. 

The ${n+d-1 \choose d}$-dimensional space $\txt{Sym}^d(\K{R}^n)\subset \otimes^d_{j=1}\K{R}^n$ of symmetric $n^d$-tensors is isomorphic to the space $P_{d,n}$ of $n$-ary $d$-homogeneous real forms. The symmetric tensor $A$ is identified with the form $p_A$ defined as
\begin{equation}\label{eq:symtensors-forms}
p_A(x)= \langle A,x\otimes \dots\otimes x\rangle_F= \sum\limits_{i_1,\dots,i_d=1}^n a_{i_1\dots i_d}x_{i_1}\dots x_{i_d},\quad x\in \K{R}^n,
\end{equation}
which equals the restriction of the multilinear map~\eqref{eq:isomorphy with multilinear form} to the ``diagonal'' in $\K{R}^n \times \dots \times \K{R}^n$. 
It is convenient to represent $p_A$ in the basis of monomials 
\[
p_A(x) = \sum_{\abs{\alpha}=d} a_{\alpha}x^{\alpha},
\]
where
\begin{equation}\label{eq:definition of coefficient}
a_{\alpha} = {d\choose \alpha} a_{i_1\dots i_d}
\end{equation}
and $\{i_1,\dots,i_d\}$ is any collection of indices such that for $i=1,\dots, n$ the value $i$ occurs $\alpha_i$ times among $i_1,\dots,i_d$.

As an example, the binary Chebyshev form $\Ch_{d,2}$ in~\eqref{eq:Chebyshev binary} corresponds to the symmetric tensor with entries
\begin{equation}\label{eq: tensor for Chebyshev}
 a_{i_1\dots i_d} = \begin{cases}
                     (-1)^k \quad &\txt{if $\#\{i_j=2\} = 2k$,} \\
                     0 \quad &\txt{otherwise}
                    \end{cases}
\end{equation}
and the associated multilinear map~\eqref{eq:isomorphy with multilinear form} is given by
\begin{equation}\label{eq: chebyshev desymmetrized}
 \langle A, x^{(1)} \otimes \dots \otimes x^{(d)} \rangle = \sum\limits_{k=0}^{[d/2]} (-1)^k\sum\limits_{\#\{i_j=2\} = 2k} x^{(1)}_{i_1}\dots x^{(d)}_{i_{d}}.
\end{equation}

%

Banach proved~\cite{Banach} that for a symmetric coefficient tensor $A$, the maximum absolute value of the multilinear form~\eqref{eq:isomorphy with multilinear form} on a product of spheres can be attained at diagonal inputs, in other words,
\begin{equation}\label{eq:spectral=uniform}
  \Vert A\Vert_2 = \max\limits_{\| x \| = 1} \abs{p_A(x)} = \Vert p_A\Vert_{\infty}.
\end{equation}
This is a generalization of the fact that for a symmetric matrix $A$ the maximum absolute value of the bilinear form $x^T A y$ is, modulo scaling, attained when $x=y$ is an eigenvector for the eigenvalue with the largest absolute value. Therefore, spectral norm for symmetric tensors may be intrinsically defined in the space $\txt{Sym}^d(\K{R}^n)$. 


Next, one can easily check that the Frobenius inner product between two symmetric tensors $A=(a_{i_1\dots i_d})$, $A^{\prime}=(a^{\prime}_{i_1\dots i_d})\in \txt{Sym}^d(\K{R}^n)$ equals the \emph{Bombieri product} between the corresponding homogeneous forms $p_A(x)=\sum_{\abs{\alpha}=d} a_{\alpha}x^{\alpha}$ and $p_{A^{\prime}}(x)=\sum_{\abs{\alpha}=d} a^{\prime}_{\alpha} x^{\alpha}$ with coefficients defined through~\eqref{eq:definition of coefficient}:
\begin{equation}\label{eq:F=B}
  \langle A, A^{\prime}\rangle_{F} = \sum\limits_{i_1,\dots,i_d=1}^n a^{}_{i_1\dots i_d}a^{\prime}_{i_1\dots i_d} = \sum\limits_{\abs{\alpha}=d} {d\choose \alpha}^{-1} a^{}_{\alpha} a^{\prime}_{\alpha} \eqqcolon \langle p_A, p_{A^{\prime}}\rangle_{B}.
\end{equation}

By~\eqref{eq:spectral=uniform} and~\eqref{eq:F=B}, the isomorphism $A \mapsto p_A$ establishes an isometry between $(\txt{Sym}^d(\K{R}^n), \| \cdot \|_2)$ and $(P_{d,n},\| \cdot \|_\infty)$, as well as between $(\txt{Sym}^d(\K{R}^n), \| \cdot \|_F)$ and $(P_{d,n},\| \cdot \|_B)$. 

When $n_1=\dots=n_d=n$ the diagonal subaction of the action \eqref{eq:action} preserves the subspace $\txt{Sym}^d(\K{R}^n)$ of symmetric tensors and it corresponds to the action of the orthogonal group on the space $P_{d,n}$ of homogeneous forms by orthogonal change of variables:
\begin{equation}\label{eq:action2}
  \rho\in O(n), \ p\in P_{d,n} \ \mapsto\ \rho^*p \in P_{d,n}, \quad (\rho^* p)(x) = p (\rho^{-1}x).
\end{equation}
Due to \eqref{eq:F=B}, this shows that the Bombieri inner product is invariant under such a change of variables.


Finally, we have already noted that according to~\eqref{eq:symtensors-forms} a symmetric rank-one tensor $Y = \pm  y \otimes \cdots \otimes y$ corresponds to the $d$th power of a linear form $\langle y,\cdot \rangle$ as follows:
\begin{equation}\label{eq: power of linear form}
p_{Y}(x) = \langle \pm y \otimes \dots \otimes y, x \otimes \dots \otimes x \rangle_F = \pm \langle y, x \rangle^d.
\end{equation}
Hence a decomposition of a symmetric tensor into symmetric rank-one tensors corresponds to a decomposition of the associated homogeneous form into powers of linear forms. Note that by~\eqref{eq:symtensors-forms} the Bombieri inner product of any homogeneous form $p \in P_{d,n}$ with a $d$th power of a linear form $\langle y,\cdot\rangle$ equals
\(
\langle p , \langle y,\cdot\rangle^d \rangle_B = p(y).
\)

\subsection{Best rank-one approximation ratio}\label{sub:ratio}


Given a nonzero tensor $A\in \otimes^d_{j=1}\K{R}^{n_j}$, a rank-one $(n_1,\dots,n_d)$-tensor $Y=\lambda\, y^{(1)}\otimes\dots\otimes y^{(d)}$, where $\lambda\in \K{R}$ and $\Vert y^{(i)}\Vert = 1$, $i=1,\dots, d$, is a best rank-one approximation to $A$ if and only if
\begin{equation}\label{eq:app}
  \lambda = \langle A,y^{(1)}\otimes\dots\otimes y^{(d)}\rangle_F = \pm \Vert A\Vert_2.
\end{equation}
Banach's result \cite{Banach} implies that one can take $y^{(1)}=\dots=y^{(d)}\in \K{R}^n$ in \eqref{eq:app} if the tensor $A\in \txt{Sym}^d(\K{R}^n)$ is symmetric. 


Also if $Y=\lambda\,y^{(1)} \otimes \dots \otimes y^{(d)}$ is a best rank-one approximation of $A$ as above, then for every $j=1,\dots,d$ the linear form $x^{(j)} \mapsto \langle A, y^{(1)} \otimes \cdots \otimes x^{(j)} \otimes \cdots \otimes y^{(d)} \rangle_F$ constrained to $\| x^{(j)} \| = 1$ achieves its maximum at $y^{(j)}$ and hence it vanishes on the orthogonal complement of $y^{(j)}$, that is,
\begin{equation}\label{lemma:normal form}
  \langle A, y^{(1)}\otimes \dots \otimes y^{(j-1)}\otimes x^{(j)}\otimes y^{(j+1)} \otimes \dots\otimes y^{(d)}\rangle_F = 0
\end{equation}
for all $x^{(j)}\in \K{R}^{n_j}$ that are orthogonal to $y^{(j)}$.

We continue with some remarks on extremal tensors and best rank-one approximation ratio. From the definition \eqref{eq:distance} of a best rank-one approximation and \eqref{eq:app} we have
\begin{equation}\label{eq: distance to rank one}
\min_{\txt{rank}(X)=1} \Vert A-X\Vert_F^2 = \| A - Y \|_F^2 = \| A \|_F^2 - \| A \|_2^2
\end{equation}
for any best rank-one approximation $Y$ to $A\in \otimes^d_{j=1}\K{R}^{n_j}$.
Recalling the definition~\eqref{eq:ratiointro} of the best rank-one approximation ratio $\App(\otimes^d_{j=1}\K{R}^{n_j})$, the maximum relative distance of a tensor to the set of rank-one tensors is given as
\begin{equation}\label{eq:relation to rank one}
\max_{0\neq A\in \otimes^d_{j=1}\K{R}^{n_j}} \min_{\rk(X) = 1} \frac{\| A - X \|_F}{\|A\|_F} 
= \sqrt{1 - \App(\otimes^d_{j=1}\K{R}^{n_j})^2}
\end{equation}
and is achieved for extremal tensors. This relation explains the name ``best rank-one approximation ratio'' for the constant $\App(\otimes^d_{j=1}\K{R}^{n_j})$. When restricting to symmetric tensors,~\eqref{eq:relation to rank one} holds with $\App(\txt{Sym}^d(\K{R}^n))$ instead.

For context we note that the relation~\eqref{eq:relation to rank one} shows that lower bounds on $\App(\otimes^d_{j=1}\K{R}^{n_j})$ can be used for convergence analysis of greedy methods for low-rank approximation using rank-one tensors as a dictionary. For example, the pure greedy method to approximate $A\in \otimes^d_{j=1}\K{R}^{n_j}$ produces a recursive sequence $A_{\ell+1} = A_\ell + Y_\ell$, where $A_0 = 0$ and $Y_\ell$ is a best rank-one approximation of $A - A_\ell$. Then~\eqref{eq:relation to rank one} implies
\[
\| A - A_{\ell+1} \|_F \le \sqrt{1 - \App(\otimes^d_{j=1}\K{R}^{n_j})^2} \| A - A_\ell \|_F;
\]
see~\cite{Temlyakov:2011} for a general introduction to greedy methods. For a more general problem of finding an approximate low-rank minimizer for a smooth cost function $f \colon \otimes^d_{j=1}\K{R}^{n_j} \to \K{R}$, one could replace $Y_\ell$ with a (scaled) best rank-one approximation of a suitable residual, for example, the negative gradient $-\nabla f(A_\ell)$. Then $\App(\otimes^d_{j=1}\K{R}^{n_j})$ is a lower bound for the (cosine of the) angle between the search direction $Y_\ell$ and $-\nabla f(A_\ell)$ and hence can be used to estimate the convergence of such an iteration; see, e.g.,~\cite{U2015} and references therein. Again, for symmetric tensors once can replace $\App(\otimes^d_{j=1}\K{R}^{n_j})$ with $\App(\txt{Sym}^d(\K{R}^n))$ in these considerations.

In the following lemma we show that the best rank-one approximation ratio strictly decreases with the dimension.
\begin{lemma}\label{lem:monotone}
Let $\App_{d,n}$ denote either $\App(\otimes^d_{j=1}\K{R}^{n})$ or $\App(\txt{Sym}^d(\K{R}^{n}))$. Then for any $d\geq 1$ and $n\geq 1$ we have
\[
\App_{d,n+1} \le \frac{\App_{d,n}}{\sqrt{1 + \App_{d,n}^2}}. 
\]
\end{lemma}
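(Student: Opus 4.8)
The plan is to prove the bound by exhibiting, for an extremal tensor in dimension $n$, a single test tensor in dimension $n+1$ whose norm ratio is at most $\App_{d,n}/\sqrt{1+\App_{d,n}^2}$. Since the Frobenius unit sphere is compact and the spectral norm is continuous, an extremal tensor exists; fix one and set $a=\App_{d,n}$. In the non-symmetric case take $A\in\otimes^d_{j=1}\K{R}^n$ with $\|A\|_F=1$ and $\|A\|_2=a$, and define
\[
B=\iota(A)+a\,e_{n+1}\otimes\dots\otimes e_{n+1}\in\otimes^d_{j=1}\K{R}^{n+1},
\]
where $\iota$ is the coordinate embedding $\otimes^d_{j=1}\K{R}^n\hookrightarrow\otimes^d_{j=1}\K{R}^{n+1}$. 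In the symmetric case I take instead the form $q=p+a\,x_{n+1}^d\in P_{d,n+1}$ attached to an extremal $p\in P_{d,n}$ with $\|p\|_B=1$, $\|p\|_\infty=a$; by the isometry of subsection~\ref{sub:tensors} this is the same construction restricted to the diagonal, so I describe the tensor version and indicate the symmetric specialization.

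First I would compute the Frobenius norm. The two summands of $B$ have disjoint index supports (all indices $\le n$ versus all indices equal to $n+1$), hence are Frobenius-orthogonal, giving $\|B\|_F^2=\|A\|_F^2+a^2=1+a^2$. In the symmetric case the same follows from $\langle p,x_{n+1}^d\rangle_B=p(e_{n+1})=0$ and $\|x_{n+1}^d\|_B=1$.

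The crux is the estimate $\|B\|_2\le a$. Writing each unit input as $x^{(j)}=(y^{(j)},t_j)$ with $y^{(j)}\in\K{R}^n$, $\|y^{(j)}\|^2+t_j^2=1$, one has
\[
\langle B,x^{(1)}\otimes\dots\otimes x^{(d)}\rangle_F=\langle A,y^{(1)}\otimes\dots\otimes y^{(d)}\rangle_F+a\,t_1\cdots t_d,
\]
whose absolute value is at most $a\prod_{j}\sqrt{1-t_j^2}+a\prod_{j}|t_j|$ by definition of $\|A\|_2=a$. Everything thus reduces to the elementary inequality $\prod_{j=1}^d\sqrt{1-t_j^2}+\prod_{j=1}^d|t_j|\le 1$ for $t_j\in[0,1]$, i.e.\ $\prod_j\cos\phi_j+\prod_j\sin\phi_j\le 1$. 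I expect this to be the main obstacle and would prove it by induction on $d$: the base case $d=2$ is $\cos(\phi_1-\phi_2)\le 1$, and for $d\ge 3$ the inductive step uses Cauchy--Schwarz, $C\cos\phi_d+S\sin\phi_d\le\sqrt{C^2+S^2}\le C+S\le 1$, with $C=\prod_{j<d}\cos\phi_j$, $S=\prod_{j<d}\sin\phi_j$, where $\sqrt{C^2+S^2}\le C+S$ holds since $C,S\ge 0$ and $C+S\le 1$ is the induction hypothesis. In the symmetric case this specializes to $(1-t^2)^{d/2}+t^d\le 1$, which also follows directly from $x^r\le x$ on $[0,1]$ for $r=d/2\ge 1$. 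Hence $\|B\|_2\le a$ (and equality holds, the value $a$ being attained at $t_1=\dots=t_d=0$).

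Combining the two computations yields $\App_{d,n+1}\le\|B\|_2/\|B\|_F\le a/\sqrt{1+a^2}$, and the identical estimate with $q$ in place of $B$ settles the symmetric case. I note that the displayed inequality, and hence the asserted bound, is the substantive content for $d\ge 2$; this is consistent with the fact that for $d=1$ the spectral and Frobenius norms coincide.
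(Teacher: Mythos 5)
Your proof is correct and takes essentially the same route as the paper: your test tensor $B$ is just $\sqrt{1+a^2}$ times the paper's $A^\varepsilon$ with the optimal weight $\varepsilon=1/\sqrt{1+a^2}$ inserted from the outset, and your inequality $\prod_j\sqrt{1-t_j^2}+\prod_j\abs{t_j}\le 1$ is precisely the generalized H\"older step the paper quotes from Hardy--Littlewood--P\'olya, which you instead prove by an elementary induction. Your closing remark is also on target: both arguments (and the asserted bound itself) genuinely need $d\ge 2$, despite the lemma being stated for $d\ge 1$.
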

\begin{proof}
Let $A \in \otimes^d_{j=1}\K{R}^{n}$ be an $n^d$-tensor of unit Frobenius norm, $\|A \|_F = 1$. For $\varepsilon\in [0,1]$, let $A^\varepsilon \in \otimes^d_{j=1}\K{R}^{n+1}$ be the $(n+1)^d$-tensor with entries
\begin{equation}\label{eq:defA}
  a^{\varepsilon}_{i_1\dots i_d}=
  \begin{cases}
    \sqrt{1-\varepsilon^2\|A \|_2^2} a_{i_1\dots i_d} &\txt{if $i_1,\dots,i_d \le n$,}\\
    \varepsilon \| A \|_2 &\txt{if $i_1=\dots=i_d=n+1$,}\\
    0 &\txt{otherwise.}
\end{cases}
\end{equation}
Observe that $\| A^\varepsilon \|_F =1$, and $A^\varepsilon$ is symmetric if $A$ is. Let $\xi^{(1)},\dots,\xi^{(d)}$ be unit norm vectors in $\K{R}^{n+1}$ partitioned as $\xi^{(j)} = (x^{(j)},z^{(j)})$ with $x^{(j)} \in \K{R}^{n}$ and $z^{(j)} \in \K{R}$. Then from the ``block diagonal'' structure of $A^\varepsilon$ it follows that
\begin{align*}
\langle A^\varepsilon, \xi^{(1)} \otimes \dots \otimes \xi^{(d)} \rangle_F &= \sqrt{1-\varepsilon^2\| A \|_2^2} \langle  A, x^{(1)} \otimes \dots \otimes x^{(d)} \rangle_F +  \varepsilon\| A \|_2 z^{(1)} \cdots z^{(d)} \\
&\le \max\left(\sqrt{1-\varepsilon^2\|A\|_2^2}, \varepsilon \right) \| A \|_2^{} (\| x^{(1)} \| \cdots \| x^{(d)} \| + z^{(1)} \cdots z^{(d)}).
\end{align*}
By a generalized H\"older inequality \cite[\S~11]{Hardy}, the term in the right brackets is bounded by one. The maximum on the left, on the other hand, takes its minimal value for $\varepsilon = 1/ \sqrt{1 + \| A \|_2^2}$. Since $\xi^{(1)}, \cdots, \xi^{(d)}$ were arbitrary, this shows
\[
\|A ^\varepsilon \|_2 \le \frac{\| A \|_2}{\sqrt{1 + \| A \|_2^2}} .
\]
The assertions follow by choosing $A$ to be an extremal tensor in the space $\otimes^d_{j=1}\K{R}^{n}$ or $\txt{Sym}^d(\K{R}^{n})$, respectively.
\end{proof}
The previous lemma provides a lower bound on the rank of extremal tensors. 
Recall that the \emph{(real) rank} of a tensor $A\in \otimes^d_{j=1}\K{R}^{n_j}$ is the smallest number $r$ that is needed to represent $A$ as the linear combination
\begin{equation}\label{eq:rank}
A = X_1 + \dots + X_r
\end{equation}
of rank-one tensors $X_1,\dots,X_r$. 
The \emph{(real) symmetric rank} of a symmetric tensor $A$ is the smallest number of symmetric rank-one tensors needed for~\eqref{eq:rank} to hold.
\begin{propo}\label{prop:rank}
If $A\in \otimes^d_{j=1}\K{R}^n$ is an extremal tensor, its rank must be at least $n$.
If $A\in \txt{Sym}^d(\K{R}^n)$ is an extremal symmetric tensor, its symmetric rank must be at least $n$. 
\end{propo}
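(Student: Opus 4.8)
The plan is to argue by contradiction, leveraging the strict monotonicity of the best rank-one approximation ratio in the dimension that is supplied by Lemma~\ref{lem:monotone}. Suppose $A\in\otimes^d_{j=1}\K{R}^n$ is extremal but has rank $r<n$, so that by~\eqref{eq:rank} we may write $A=X_1+\dots+X_r$ with $X_\ell = x^{(1)}_\ell\otimes\dots\otimes x^{(d)}_\ell$. For each mode $j$ the vectors $x^{(j)}_1,\dots,x^{(j)}_r$ span a subspace $V_j\subseteq\K{R}^n$ with $\dim V_j\le r<n$, and $A\in V_1\otimes\dots\otimes V_d$. Choosing $\rho^{(j)}\in O(n)$ carrying $V_j$ into the coordinate subspace $\mathrm{span}(e_1,\dots,e_r)$ and applying the isometric action~\eqref{eq:action}, I would reduce to the case where $A$ lies in the subspace $\otimes^d_{j=1}\K{R}^r\subseteq\otimes^d_{j=1}\K{R}^n$.

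Next I would verify that regarding $A$ as an element of the smaller space $\otimes^d_{j=1}\K{R}^r$ changes neither its Frobenius nor its spectral norm. The Frobenius norm is unaffected by construction. For the spectral norm, I would observe that in the maximization~\eqref{eq:spectral norm} over unit vectors $x^{(j)}\in\K{R}^n$ the quantity $\langle A,x^{(1)}\otimes\dots\otimes x^{(d)}\rangle_F$ depends only on the orthogonal projections $P_j x^{(j)}$ onto $\K{R}^r$, since $A$ is supported on $\otimes^d_{j=1}\K{R}^r$; as $\|P_j x^{(j)}\|\le 1$, replacing each $x^{(j)}$ by $P_j x^{(j)}/\|P_j x^{(j)}\|$ does not decrease the value, so the maximizing arguments can be taken inside $\K{R}^r$. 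Hence the spectral norm computed in $\otimes^d_{j=1}\K{R}^r$ coincides with the one computed in $\otimes^d_{j=1}\K{R}^n$, and therefore $\|A\|_2/\|A\|_F\ge\App(\otimes^d_{j=1}\K{R}^r)$ because $\App$ is the minimum of this ratio over the smaller space.

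Finally I would combine this with the monotonicity. Lemma~\ref{lem:monotone} gives $\App_{d,m+1}\le\App_{d,m}/\sqrt{1+\App_{d,m}^2}<\App_{d,m}$ for every $m$, so the ratio is strictly decreasing; in particular $\App(\otimes^d_{j=1}\K{R}^r)>\App(\otimes^d_{j=1}\K{R}^n)$ whenever $r<n$. But $A$ extremal means $\|A\|_2/\|A\|_F=\App(\otimes^d_{j=1}\K{R}^n)$, so the previous step yields $\App(\otimes^d_{j=1}\K{R}^n)\ge\App(\otimes^d_{j=1}\K{R}^r)>\App(\otimes^d_{j=1}\K{R}^n)$, a contradiction; hence $r\ge n$.

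For the symmetric statement the argument is identical after replacing the rank decomposition by a symmetric one $A=\sum_{\ell=1}^r\lambda_\ell\,y_\ell^{\otimes d}$, so that $A\in\txt{Sym}^d(V)$ with $V=\mathrm{span}(y_1,\dots,y_r)$ of dimension $\le r$. Here the projection step uses Banach's identity~\eqref{eq:spectral=uniform}: since $\|A\|_2=\max_{\|x\|=1}|p_A(x)|$ and $p_A(x)=\sum_\ell\lambda_\ell\langle y_\ell,x\rangle^d=p_A(P_V x)$ depends only on the projection of $x$ onto $V$, the spectral norm is unchanged under the restriction to $\txt{Sym}^d(\K{R}^r)$, and one invokes the symmetric version of Lemma~\ref{lem:monotone}. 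I expect the only delicate point to be this invariance of the spectral norm under shrinking the ambient space, namely the projection argument and, in the symmetric case, its compatibility with Banach's theorem; everything else follows directly from the strict monotonicity already established.
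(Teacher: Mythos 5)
Your proof is correct and follows essentially the same route as the paper: use the rank (resp.\ symmetric rank) decomposition to place $A$ inside a tensor product of lower-dimensional subspaces, conclude $\|A\|_2/\|A\|_F\geq\App(\otimes^d_{j=1}\K{R}^r)$, and invoke the strict monotonicity from Lemma~\ref{lem:monotone} to contradict extremality. Your explicit projection argument for the invariance of the spectral norm is a nice touch but more than is needed, since the inequality only requires the trivial direction (the spectral norm over the ambient space is at least the one computed within the subspace), which is what the paper implicitly uses.
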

\begin{proof}
Let  $A\in \otimes^d_{j=1}\K{R}^n$ be a tensor of rank at most $n-1$, that is,
\begin{equation}
  A=v^{(1)}_1\otimes\dots\otimes v^{(d)}_1+\dots+v^{(1)}_{n-1}\otimes\dots\otimes v^{(d)}_{n-1}.
\end{equation}
For $j=1,\dots,d$ let $V^{(j)}\simeq \K{R}^{n-1}$ be any $(n-1)$-dimensional subspace of $\K{R}^n$ that contains vectors $v^{(j)}_1,\dots, v^{(j)}_{n-1}$. Since $A\in V^{(1)}\otimes \dots\otimes V^{(d)}\simeq \otimes^d_{j=1}\K{R}^{n-1}$ we have 
\begin{equation}
\frac{\Vert A\Vert_2}{\,\Vert A\Vert_F} \geq  \App(\otimes_{j=1}^d\K{R}^{n-1}).
\end{equation}
Thus, by Lemma~\ref{lem:monotone}, $A$ cannot be extremal in $\otimes_{j=1}^d\K{R}^{n}$. 

When $A$ is symmetric and of symmetric rank at most $n-1$ we can choose $V^{(1)}=\dots=V^{(d)} = V$ so that $A\in \txt{Sym}^d(V)\simeq \txt{Sym}^d(\K{R}^{n-1})$, leading to the analogous conclusion.
%
\end{proof}

\subsection{Generalized gradients and local optimality of Lipschitz functions }\label{sub:gengrad}

The problem of determining the best rank-one approximation ratio of a tensor space and finding extremal tensors is a constrained optimization problem for a Lipschitz function.
 The theory of generalized gradients developed by Clarke~\cite{Clarke1975} provides necessary optimality conditions. 
We provide here only the most necessary facts of this theory needed for our results. A comprehensive introduction is given, e.g., in~\cite{ClarkeBook1990}.
 
A function $f: \K{R}^m \rightarrow \K{R}$ is called \emph{Lipschitz}, if there exist a constant $L$ such that $\abs{f(p)-f(q)}\leq L\Vert p-q\Vert$ for all pairs $p, q\in \K{R}^m$. By the classical Rademacher's theorem, a Lipschitz function $f$ is differentiable at almost all (in the sense of Lebesgue measure) points $p\in \K{R}^m$. Denote by $\nabla f(p)$ the gradient of $f$ at such a point. The \emph{generalized gradient} of $f$ at any $p \in \K{R}^m$, denoted as $\partial  f(p)$, is then defined as the convex hull of the set of all limits $\nabla f(p_i)$, where $p_i$ is a sequence of differentiable points that converges to $p$. It turns out that $\partial f(p)$ is a nonempty convex compact subset of $\K{R}^m$. Moreover $\partial f(p)$ is a singleton if and only if $f$ is differentiable at $p$, in which case $\partial f(p) = \{ \nabla f(p) \}$.
 
 

Let $S$ be a differentiable submanifold in $\K{R}^m$. Then a necessary condition for the Lipschitz function $f$ to attain a local minimum relative to $S$ at $x \in S$ is that
\begin{equation}\label{eq:condition}
\partial f(p) \cap N_S(p) \neq \emptyset,
\end{equation}
where $N_S(p)$ denotes the normal space, that is, the orthogonal complement of the tangent space of $S$ at $p$. Note that this is a ``Lipschitz'' analogue of the classical Lagrange multipliers rule for continuously differentiable functions. We refer to~\cite[Sec.~2.4]{ClarkeBook1990}. Every point $p \in S$ that satisfies~\eqref{eq:condition} is called a \emph{critical point} of $f$ on $S$. Hence local minima of $f$ on $S$ are among the critical points.




The proofs of Theorems~\ref{thm:varchar} and~\ref{thm:varcharsym} in section \ref{sub:varchar} consist in applying the necessary optimality condition~\eqref{eq:condition} to the spectral norm function on the sphere defined by Frobenius norm. Here two things are of relevance. First, for a Euclidean sphere $S$ we have $N_s(p) = \{\mu p: \mu \in \K{R}\}$. Hence the condition~\eqref{eq:condition} becomes
\begin{equation}\label{eq:nec condition on sphere}
\mu p \in \partial f(p)
\end{equation}
for some $\mu \in \K{R}$. Second, by~\eqref{eq:spectral norm}, the spectral norm is an example of a so-called max function, that is, a function of the type
\begin{equation}\label{eq:max function}
f(p) = \max_{u \in C} g(p,u),
\end{equation}
where $C$ is compact. Under certain smoothness conditions on the function $g$, which are satisfied for spectral norm~\eqref{eq:spectral norm}, Clarke~\cite[Thm.~2.1]{Clarke1975} has determined the following characterization of the generalized gradient:
\begin{equation}\label{eq:general formula for subdifferential}
\partial f(p) = \txt{conv} \{ \nabla_p g(p,u) : u \in M(p) \},
\end{equation}
where $\text{conv}$ denotes the convex hull and $M(p)$ is the set of all maximizers $u$ in~\eqref{eq:max function} for a fixed $p$. For the spectral norm~\eqref{eq:spectral norm}, this set consists of all normalized best rank-one approximations of a given tensor; see~\eqref{eq:subdifferential of spectral norm}.


\section{Proof of main results}\label{sec:proofs}

Our main results are proved in this section. We are going to repeatedly use the equivalence \eqref{eq:symtensors-forms} between symmetric tensors and homogeneous forms and the corresponding relations \eqref{eq:spectral=uniform}, \eqref{eq:F=B} for the different norms.

\subsection{Binary forms}\label{sub:binary forms}

This subsection is devoted to the proof of Theorem \ref{thm:binary}. 
While the given proof is self-contained, some arguments could be omitted with reference to results in~\cite{LNSU2018}.

\begin{proofof}{Theorem \ref{thm:binary}}
By~\eqref{eq:norms of Cheb},
\[
\frac{\| \Ch_{d,2} \|_2}{\, \| \Ch_{d,2} \|_B} = \frac{1}{\sqrt{2^{d-1}}}.
\]
It then follows from~\eqref{eq:naive lower bound} that this value equals $\App(\otimes^d_{j=1}\K{R}^{2})$, so the symmetric tensor associated to the Chebyshev form must be extremal both in $\otimes^d_{j=1}\K{R}^{2}$ and in $\txt{Sym}^d(\K{R}^2)$.

We now consider the uniqueness statements. When $d=1$, the space $P_{1,n}$ consists of linear forms $p(x) = \langle a, x\rangle$, for any of which it holds that $\| p \|_\infty / \| p \|_B = 1$. 
In the case $d=2$ of quadratic forms, the minimal ratio between spectral and Frobenius norm of a symmetric $n \times n$ matrix is attained for multiples of symmetric orthogonal matrices only and takes the value $1/\sqrt{n}$. When $n=2$, all such matrices can be obtained by orthogonal transformation and scaling from the two diagonal matrices with diagonal entries $(1,1)$ and $(1,-1)$, respectively. This corresponds to the asserted quadratic forms $p \in P_{d,2}$.

In the case $d \ge 3$ we have to show that the only symmetric $2^d$-tensors $A$ satisfying
\begin{equation}\label{eq:optimal values}
\| A \|_2 = 1, \quad \| A \|_F = \sqrt{2^{d-1}}
\end{equation}
are obtained from orthogonal transformations of the Chebyshev form $\Ch_{d,2}$. To this end, we show that under the additional condition
\begin{equation}\label{eq:conditions1}
 p_A(e_1) = \langle A, e_1 \otimes \dots \otimes e_1 \rangle_F = 1 = \| A \|_2,
\end{equation}
the form $p_A$ equals $\Ch_{d,2}$. The proof is given by induction over $d \ge 3$. Before giving this proof we note that for a $2^d$-tensor $A$ satisfying~\eqref{eq:optimal values}, its two \emph{slices} $A_1 = (a_{i_1\dots i_{d-2} 1})$ and $A_2 = (a_{i_1\dots i_{d-1} 2})$ necessarily have the same Frobenius norm $\| A_1 \|_F = \| A_2 \|_F = \sqrt{2^{d-2}}$. In fact, $\| A \|_2 = 1$ implies $\| A_1 \|_2 \le 1$ and hence, by~\eqref{eq:naive lower bound}, $\| A_1 \|_F \le \sqrt{2^{d-2}}$. Since the same holds for $A_2$ and since $\| A \|_F^2 = \| A_1 \|_F^2 + \| A_2 \|_F^2$, the claim follows. Moreover, $\| A_1 \|_2 = \| A_2 \|_2 = 1$, again by~\eqref{eq:naive lower bound}, so that both slices are necessarily extremal. Note that by the same argument, every $2^{d'}$-subtensor of $A$ with $d' < d$ must be extremal.

We begin the induction with $d=3$. Assume $A\in \txt{Sym}^3(\K{R}^2)$ satisfies~\eqref{eq:optimal values} and~\eqref{eq:conditions1}. Then we have seen that both, say, frontal slices of $A$ are themselves extremal symmetric $2 \times 2$ matrices. By~\eqref{eq:conditions1}, $a_{111} = 1$ and the tensor $e_1 \otimes e_1 \otimes e_1$ is a best rank-one approximation. From~\eqref{lemma:normal form} we then deduce that $a_{112} = a_{121} = a_{211} = 0$. The only two remaining options for the slices of $A$ are
\begin{equation}
  A= \left(
  \begin{matrix}
    1 & 0 \\
    0 & \pm 1
  \end{matrix}\ \Big\vert\ 
        \begin{matrix}
          0 & \pm 1\\
          \pm 1 & 0
        \end{matrix}
\right).
\end{equation}
But the case $a_{122}=a_{221}=a_{212}=+1$ is also not possible, since it corresponds to the form $p_A(x) = x_1^3+3x_1x_2^2$ whose maximum on the sphere is $\| p_A \|_\infty = \sqrt{2} > 1$. Therefore, $a_{122} = a_{221} = a_{212} = -1$ and $p_A = x_1^3-3x_1x_2^2$ is the cubic Chebyshev form.

We proceed with the induction step. If $A\in \txt{Sym}^{d+1}(\K{R}^2)$ satisfies~\eqref{eq:optimal values} and~\eqref{eq:conditions1}, then its two slices $A_1 = (a_{i_1\dots i_{d} 1})$ and $A_2 = (a_{i_1\dots i_{d}2})$ are extremal $2^d$-tensors. Since $p_{A_1}(e_1) = p_A(e_1) = 1$, it follows from the induction hypothesis that $A_1 = \Ch_{d,2}$. So its entries are given by~\eqref{eq: tensor for Chebyshev}. Let $a_{i_1 \dots i_{d} 2}$ denote an entry of the second slice. Due to the symmetry of $A$, every entry in the second slice, except for the entry $a_{2 \dots 2}$, equals an entry in the first slice after a permutation of the indices. Since this permutation does not affect the number of occurrences of the value $2$ among the indices, the definition~\eqref{eq: tensor for Chebyshev} applies to all these entries as well. It remains to show that the entry $a_{2 \dots 2}$ satisfies~\eqref{eq: tensor for Chebyshev}, that is, equals zero in case $d + 1$ is odd, and equals $(-1)^m$ in case $d+1 = 2m$ is even. This entry is part of the symmetric subtensor $A' = (a_{i_1 i_2 i_3 2 \dots 2})$, which as we have noted above must be extremal as well. Since the entries of the first slice $A_1$ are given by~\eqref{eq: tensor for Chebyshev}, we find that
\[
p_{A'}(x) = (-1)^{m-1}(x_1^3-3x_1x_2^2) + a_{2\cdots 2} x_2^3
\]
if $d+1=2m+1$ is odd. Since $A'$ is extremal, it then follows from the base case $d=3$ that $a_{2\cdots 2} = 0$. In case $d+1 = 2m$ is even, we get
\begin{equation}
  p_{A^{\prime}}(x_1,x_2) = (-1)^{m-1}3x_1^2x_2 + a_{2\cdots 2} x_2^3,
\end{equation}
which by a small consideration implies $a_{2\cdots 2} = (-1)^{m}$. This concludes the proof.
\end{proofof}

\subsection{Ternary cubic tensors}\label{sub:ternary cubics}

In this section we prove Theorem~\ref{thm:max orthogonal rank}. It has been mentioned in section~\ref{subsec: best rank-one ratio etc} how Corollary~\ref{cor:App for ternary cubics} follows from it, and that the statement of Theorem~\ref{thm:ternary cubics} is included in the latter.

The proof of Theorem~\ref{thm:max orthogonal rank} requires a  fact from~\cite{KM2015}. Since it is not explicitly formulated there, we state it here as a lemma.
\begin{lemma}\label{lemma 1}
  For odd $n$ let $A_1, A_2\in \K{R}^n\otimes \K{R}^n$ be two $n\times n$ matrices. If at least one of them is invertible, then there exist orthogonal matrices $\rho, \rho^{\prime} \in O(n)$ such that
  \begin{equation}
    \rho A_1 \rho^{\prime} = 
    \begin{pmatrix}
      B_1 & c_1 \\
      0 & d_1
    \end{pmatrix},\quad \rho A_2 \rho^{\prime} = 
          \begin{pmatrix}
            B_2 & c_2 \\
            0 & d_2
          \end{pmatrix},
  \end{equation}
where $B_1, B_2$ are matrices of size $(n-1)\times (n-1)$, $c_1, c_2$ are $(n-1)$-dimensional vectors and $d_1, d_2$ are real numbers.
\end{lemma}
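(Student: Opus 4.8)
The plan is to reduce the simultaneous block-triangularization to a single linear-algebraic condition on the last row, and then to exploit the parity of $n$ through an odd-degree determinantal polynomial.

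First I would reformulate what the desired conclusion means at the level of the last row. Writing $u^T$ for the last row of the sought $\rho$ and $w_1,\dots,w_n$ for the (orthonormal) columns of $\rho'$, the $(n,j)$ entry of $\rho A_i \rho'$ equals $(A_i^T u)^T w_j$. The required shape, namely a vanishing bottom-left $1\times(n-1)$ block in \emph{both} products, is therefore equivalent to demanding $(A_i^T u)\perp w_j$ for all $j\le n-1$ and $i=1,2$. Since $w_1,\dots,w_{n-1}$ span an $(n-1)$-dimensional subspace $W$ with $W^\perp=\operatorname{span}(w_n)$, this is the same as requiring that both $A_1^T u$ and $A_2^T u$ lie on the single line $\operatorname{span}(w_n)$. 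In other words, it suffices to find a unit vector $u$ for which $A_1^T u$ and $A_2^T u$ are linearly dependent.

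Next I would produce such a $u$. After possibly swapping $A_1$ and $A_2$ (the conclusion is symmetric in them), assume $A_1$ is invertible, and consider the polynomial $p(\lambda)=\det(A_2-\lambda A_1)$. Its leading term is $(-1)^n(\det A_1)\lambda^n$, which is nonzero, so $p$ has degree exactly $n$. Because $n$ is odd, the real polynomial $p$ has a real root $\lambda_0$, whence $\det(A_2^T-\lambda_0 A_1^T)=\det(A_2-\lambda_0 A_1)=0$ and there is a nonzero $u$ with $(A_2^T-\lambda_0 A_1^T)u=0$, i.e.\ $A_2^T u=\lambda_0 A_1^T u$. Normalizing $u$ and noting $A_1^T u\neq 0$ since $A_1$ is invertible, we conclude that $A_1^T u$ and $A_2^T u$ both lie on the line spanned by $A_1^T u$.

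Finally I would assemble $\rho$ and $\rho'$. Set $w_n=A_1^T u/\|A_1^T u\|$, let $w_1,\dots,w_{n-1}$ be any orthonormal basis of $w_n^\perp$, and let $\rho'$ be the orthogonal matrix with these columns; complete $u^T$ to an orthogonal $\rho$ by choosing any orthonormal basis of $u^\perp$ for its first $n-1$ rows. By the reformulation above the bottom-left blocks of both $\rho A_i\rho'$ vanish, the corner entries are $d_1=\|A_1^T u\|$ and $d_2=\lambda_0\|A_1^T u\|$, and $B_i,c_i$ are read off from the remaining entries. The only place the hypotheses enter is the existence of a real root, so the main (and essentially the only) substantive point is the parity argument: an odd-degree real polynomial has a real root, a step that genuinely fails for even $n$, where $A_2-\lambda A_1$ may possess no real generalized eigenvalue. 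The invertibility assumption is used precisely to guarantee both $\deg p=n$ and $A_1^T u\neq 0$.
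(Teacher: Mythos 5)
Your proof is correct, and it shares its pivotal idea with the paper's proof: both exploit the oddness of $n$ through the existence of a real generalized eigenvalue of the pencil $(A_2,A_1)$ --- in the paper a real eigenvalue $d$ of $A_1^{-1}A_2$, in your version a real root $\lambda_0$ of $\det(A_2-\lambda A_1)$, which amount to the same thing once $A_1$ is invertible. Where you genuinely diverge is in how the orthogonal matrices are built. The paper conjugates $A_1^{-1}A_2$ into block upper triangular form by an invertible $P$, takes QR decompositions $A_1P=Q_1R_1$ and $P=Q_2R_2$, and sets $\rho=Q_1^{-1}$, $\rho'=Q_2$; the zero pattern of $\rho A_i\rho'$ then falls out because products of block upper triangular matrices are block upper triangular. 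You instead reduce the target zero pattern to a statement about a single vector --- the last row $u^T$ of $\rho$ must make $A_1^Tu$ and $A_2^Tu$ collinear --- produce $u$ as a left null vector of $A_2-\lambda_0A_1$, and assemble $\rho$ and $\rho'$ by orthonormal completion around $u$ and $w_n=A_1^Tu/\Vert A_1^Tu\Vert$. Your route is more elementary and self-contained (no QR factorization, no similarity transformation) and yields the corner entries $d_1=\Vert A_1^Tu\Vert$ and $d_2=\lambda_0\Vert A_1^Tu\Vert$ explicitly; the paper's route is the standard deflation step of a real generalized Schur decomposition, and it iterates more naturally if one wished to triangularize further rather than split off a single row. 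The two constructions are in fact linked: the last row of $P^{-1}$ in the paper's proof is a left eigenvector $v^T$ of $A_1^{-1}A_2$ for the eigenvalue $d$, and $u^T=v^TA_1^{-1}$ is precisely, up to scale, the vector your argument works with.
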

\begin{proof}
We can assume $A_1$ is invertible. Since $n$ is odd, the matrix $A_1^{-1} A_2^{}$ has at least one real eigenvalue $d$. Then there exists an invertible matrix $P$ such that
\begin{equation}
P^{-1} A_1^{-1} A_2^{} P = \begin{pmatrix} B & c \\ 0 & d \end{pmatrix},
\end{equation}
where $B$ is a matrix of size $(n-1)\times(n-1)$ and $c$ is an $(n-1)$-dimensional vector.
Consider QR decompositions of $A_1 P$ and $P$, that is,
\[
A_1 P = Q_1 R_1, \quad P = Q_2 R_2,
\]
where $Q_1,Q_2$ are orthogonal, and $R_1,R_2$ are upper triangular and invertible. We set $\rho = Q_1^{-1}$ and $\rho^{\prime} = Q_2$. Then
\[
\rho A_1 \rho^{\prime} = R_1^{} P^{-1} A_1^{-1} A_1^{} P R_2^{-1} = R_1^{} R_2^{-1}
\]
is the product of two upper block triangular matrices, hence upper block triangular. Similarly,
\[
\rho A_2^{} \rho^{\prime} = R_1^{} P^{-1} A_1^{-1} A_2^{} P R_2^{-1} = R_1^{} \begin{pmatrix} B & c \\ 0 & d \end{pmatrix} R_2^{-1}
\]
has the asserted upper block triangular structure.
\end{proof}

In~\cite{KM2015} the previous lemma is used to show that for odd $n$ the maximum possible orthogonal rank of an $(n,n,2)$-tensor is $2n-1$. We will only need that the orthogonal rank of a $(3,3,2)$-tensor is not larger than $5$, which actually follows quite easily from the lemma by applying it to the slices.

%

\begin{proofof}{Theorem \ref{thm:max orthogonal rank}}
Note that the aforementioned result of~\cite{KuehnPeetre2006} that $1/\sqrt{7}$ is an upper bound for $\App(\K{R}^3\otimes \K{R}^3\otimes\K{R}^3)$ in combination with~\eqref{eq:lower bound ortho rank} implies that the maxmimal orthogonal rank cannot be less than seven. We will show that it is at most seven.

For $A\in \K{R}^3\otimes \K{R}^3\otimes\K{R}^3$, it is convenient to write $A=(A_1|A_2|A_3)$, where $A_1, A_2, A_3$ are the $3 \times 3$ slices along the third dimension.
 If none of the matrices $A_1,A_2,A_3$ is invertible, each of them can be decomposed into a sum of two rank-one matrices that are orthogonal in the Frobenius inner product: $A_i = u^{(1)}_i\otimes u^{(2)}_i+v^{(1)}_i\otimes v^{(2)}_i$, $i=1,2,3$. This leads to a decomposition of $A$ into at most six pairwise orthogonal rank-one tensors: 
\begin{equation}
  A=\sum\limits_{i=1}^3 u^{(1)}_i\otimes u^{(2)}_i\otimes e_i + v^{(1)}_i\otimes v^{(2)}_i\otimes e_i.
\end{equation}
Assume without loss of generality that the first slice $A_1$ is invertible. Lemma \ref{lemma 1} together with the invariance of orthogonal rank under orthogonal transformations \eqref{eq:action} allows us to assume that $A$ has the form
\begin{align}
 A &= \left( \begin{matrix} *&*&* \\ *&*&* \\ 0&0&* \end{matrix} \ \Bigg\vert \ \begin{matrix} *&*&* \\ *&*&* \\ 0&0&* \end{matrix} \ \Bigg\vert \  \begin{matrix} *&*&* \\ *&*&* \\ *&*&* \end{matrix}  \right) \\
 &= \left( \begin{matrix} *&*&* \\ *&*&* \\ 0&0&0 \end{matrix} \ \Bigg\vert \ \begin{matrix} *&*&* \\ *&*&* \\ 0&0&0 \end{matrix} \ \Bigg\vert \  \begin{matrix} *&*&* \\ *&*&* \\ 0&0&0 \end{matrix}  \right) + \left( \begin{matrix} 0&0&0 \\ 0&0&0 \\ 0&0&* \end{matrix} \ \Bigg\vert \ \begin{matrix} 0&0&0 \\ 0&0&0 \\ 0&0&* \end{matrix} \ \Bigg\vert \  \begin{matrix} 0&0&0 \\ 0&0&0 \\ *&*&* \end{matrix}  \right).
\end{align}
The first term is essentially a $(2,3,3)$-tensor, so its orthogonal rank is at most five by the result of~\cite{KM2015}. In particular, it has a decomposition into at most five pairwise orthogonal rank-one tensors with zero bottom rows. Since the bottom row of the second term is a rank-two matrix the orthogonal rank of $A$ is at most seven.
\end{proofof}

\subsection{On symmetric orthogonal tensors}

We prove Proposition~\ref{propo:n=4,8} below. For the general definition of orthogonal tensors of arbitrary size we refer to~\cite{LNSU2018}. For $n^d$-tensors we can use the recursive definition that $A \in \otimes^d_{j=1}\K{R}^{n}$ is orthogonal if $A \times_j u$ is orthogonal for every $j=1,\dots,d$ and every unit norm vector $u \in \K{R}^n$, where for $d=2$ we agree to the standard definition of an orthogonal matrix. Here and in the proof below we use standard notation $A \times_j u = \left( \sum_{i_j=1}^{n} a_{i_1 \dots i_j \dots i_d} u_{i_j} \right)$ for partial contraction of a tensor $A$ with a vector $u$ along mode $j$, resulting in a tensor of order $d-1$. Note that the above definition implies that every $n^{d'}$-subtensor, $d' < d$, of $A$ is itself orthogonal.

 
\begin{proofof}{Proposition~\ref{propo:n=4,8} and Corollary~\ref{cor:n=4,8}}
 
It has been shown in~\cite{LNSU2018} that an $n^d$-tensor $A$ is orthogonal if and only if it satisfies $\| A \|_2 =1$ and $\| A \|_F = \sqrt{n^{d-1}}$, and such tensors only exist when $n=1,2,4,8$. Therefore, the statement that for $n=2$ the only symmetric orthogonal tensors are the ones obtained from the Chebyshev form $\Ch_{d,2}$ is hence equivalent to Theorem~\ref{thm:binary}. Also, Corollary~\ref{cor:n=4,8} is immediate from Proposition~\ref{propo:n=4,8}.

We thus only have to show that for $n=4,8$ an orthogonal $n^d$-tensor cannot be symmetric. We only consider the case $n=4$; the arguments for $n=8$ are analogous. Since $n^{d'}$-subtensors of an orthogonal tensor are necessarily orthogonal, it is enough to show that orthogonal $4 \times 4 \times 4$ tensors cannot be symmetric. Assume to the contrary that such a tensor $A$ exists. Then $\| A \|_2 = 1$ and $A$ admits a symmetric best rank-one approximation of Frobenius norm one. Since orthogonality and symmetry are preserved under the action of $O(4)$ we can assume that $e_1 \otimes e_1 \otimes e_1$ is the best rank-one approximation of $A$, that is, $a_{111} = \langle A, e_1 \otimes e_1 \otimes e_1 \rangle_F = \| A \|_2 = 1$. On the other hand, the first frontal slice $A \times_3 e_1$ must be a symmetric orthogonal matrix, so it is of the form
\begin{equation}
  A\times_3 e_1 =
  \begin{pmatrix}
  1 & 0 \\
  0 & B
  \end{pmatrix},
\end{equation}
where $B$ is a symmetric orthgonal $3 \times 3$ matrix. By applying further orthogonal transformation that fix the vector $e_1$, we can assume that $B$ is a diagonal matrix with diagonal entries $\varepsilon_1, \varepsilon_2, \varepsilon_3 \in \{+1,-1\}$. 
Since $A$ is symmetric and in fact every slice has to be an orthogonal matrix, we find that $A=(A\times_3 e_1|A\times_3e_2| A\times_3 e_3|A\times_3e_4)$ is of the form
\begin{equation}
  A = 
  \left(
\begin{array}{c|c|c|c}
\begin{matrix}
    1 & 0 & 0 & 0 \\
    0 & \varepsilon_1 & 0 & 0\\
    0 & 0 & \varepsilon_2 & 0 \\
    0 & 0 & 0 & \varepsilon_3 
\end{matrix}
&
\begin{matrix}
0 & \varepsilon_1 & 0 & 0 \\
\varepsilon_1 & 0 & 0 & 0 \\
0 & 0 & 0 & \varepsilon_0 \\
0 & 0 & \varepsilon_0 & 0
\end{matrix}
&
\begin{matrix}
0 & 0 & \varepsilon_3 & 0\\
0 & 0 & 0 & \varepsilon_0 \\
\varepsilon_3 & 0 & 0 & 0 \\
0 & \varepsilon_0 & 0 & 0                         
\end{matrix}
&
\begin{matrix}
0 & 0 & 0 & \varepsilon_4\\
0 & 0 & \varepsilon_0 & 0\\
0 & \varepsilon_0 & 0 & 0\\
\varepsilon_4 & 0 & 0 & 0
\end{matrix}
\end{array}
\right),
\end{equation}
where also $\varepsilon_0\in \{+1,-1\}$. For $i=2,3,4$ the matrices $A\times_3(e_1+e_i)/\sqrt{2}$ must be orthogonal as well, which yields $\varepsilon_0 = 1$ and $\varepsilon_2=\varepsilon_3=\varepsilon_4 = -1$. But then the matrix 
\begin{equation}
A\times_3 \left(\frac{e_1-e_2}{\sqrt{2}} \right) 
=
  \frac{1}{\sqrt{2}} \left(\begin{matrix}
   1 & 1 & 0 & 0 \\
   1 & -1 & 0 & 0 \\
   0 & 0 & -1 & -1 \\
   0 & 0 & -1 & -1 
  \end{matrix}
\right)
\end{equation}
is not orthogonal, which contradicts the assumption that $A$ is an orthogonal tensor. 
\end{proofof}

 
\subsection{Variational characterization}\label{sub:varchar} 
In Theorems~\ref{thm:varchar} and \ref{thm:varcharsym} we characterize critical tensors in $\otimes^d_{j=1}\K{R}^{n_j}$ and $\txt{Sym}^d(\K{R}^n)$ in terms of decompositions into best rank-one approximations. We now prove these results and then derive Corollary \ref{cor:manybest}. Afterwards, we prove Theorem \ref{thm:criteria}.
\begin{proofof}{Theorems~\ref{thm:varchar} and~\ref{thm:varcharsym}}
From section~\ref{sub:gengrad}, specifically~\eqref{eq:nec condition on sphere}, it follows that a nonzero tensor $A^{\prime}\in \otimes^d_{j=1}\K{R}^{n_j}$ is critical in the sense of Definition~\ref{defi:critical} if the tensor $A = A^\prime/\Vert A^\prime\Vert_F$ of Frobenius norm one satisfies
\begin{equation}\label{eq:nec condition spectral norm}
 \mu A \in \partial \| A \|_2
\end{equation}
for some $\mu \in \K{R}$. By~\eqref{eq:spectral norm}, the spectral norm is a max function of the type~\eqref{eq:max function} which is easily shown to satisfy the conditions of~\cite[Thm.~$2.1$]{Clarke1975}. Therefore, its generalized derivative is given by the formula~\eqref{eq:general formula for subdifferential}, which in the case of the max function~\eqref{eq:spectral norm} reads
\begin{equation}\label{eq:subdifferential of spectral norm}
  \partial\Vert A \Vert_2 = \txt{conv}\left\{X : \| X \|_F = 1, \rk(X)=1, \langle A, X\rangle_F=\Vert A\Vert_2\right\},
\end{equation}
where $\text{conv}$ denotes the convex hull.
This lets us write~\eqref{eq:nec condition spectral norm} as
\begin{equation}\label{eq:preexpansion}
  \mu A =  \sum_{\ell=1}^r \alpha_\ell X_\ell,
\end{equation}
where $r>0$ is a natural number,\footnote{By the classical Carath\'eodory theorem one can take $r\leq \dim (\otimes^d_{j=1}\K{R}^{n_j}) +1=n_1\cdots n_d+1$.} $\alpha_1,\dots,\alpha_r>0$ are such that $\alpha_1+\dots+\alpha_r=1$, and $X_\ell$ are rank-one tensors of unit Frobenius norm satisfying $\langle A,X_\ell\rangle_F=\Vert A\Vert_2$. By taking the Frobenius inner product with $A$ itself in~\eqref{eq:preexpansion}, we find that
\[
\mu = \frac{\| A \|_2}{\| A \|_F^2}.
\]
Therefore, after multiplying the resulting equation~\eqref{eq:preexpansion} by $\| A \|_2$ we obtain the asserted statement of Theorem~\ref{thm:varchar}, since, by~\eqref{eq:app}, the rank-one tensors $Y_\ell = \| A \|_2 X_\ell$ are best rank-one approximations of $A$.
%

Considering symmetric tensors instead of general ones in the previous arguments yields a proof of Theorem~\ref{thm:varcharsym}. Here it is crucial that in the definition~\eqref{eq:spectral norm} of spectral norm for symmetric tensors one can restrict to take the maximum over symmetric rank-one tensors of unit Frobenius norm thanks to Banach's theorem; cf.~\eqref{eq:spectral=uniform}.
\end{proofof}
\begin{proofof}{Corollary~\ref{cor:manybest}}
By Proposition \ref{prop:rank} any extremal tensor in $\otimes^d_{j=1}\K{R}^n$ or $\txt{Sym}^d(\K{R}^n)$ must be of rank (respectively, symmetric rank) at least $n$. In particular, there cannot be less than $n$ best rank-one approximations in the expansions \eqref{eq:expansion2} and \eqref{eq:expansionsym2}.
\end{proofof}

\begin{proofof}{Theorem~\ref{thm:criteria}}
Let a tensor $A$ (either in $\otimes_{j=1}^d\K{R}^{n_j}$ or in $\txt{Sym}^d(\K{R}^n)$) be critical, that is, by Theorem~\ref{thm:varchar}, respectively, Theorem~\ref{thm:varcharsym}, 
\begin{equation}\label{eq: decomposition in rank one}
A = \left(\frac{\,\Vert A\Vert_F}{\Vert A\Vert_2}\right)^2 \sum_{\ell = 1}^r \alpha_\ell Y_\ell
\end{equation}
for some (symmetric, if $A$ is symmetric) best rank-one approximations $Y_1,\dots, Y_r$ to $A$, and coefficients $\alpha_1,\dots,\alpha_r>0$ that sum up to one. Recall from section \ref{sub:tensors} that the nuclear norm is dual to the spectral norm. By~\eqref{eq:duality}, this in particular means there exists a tensor $A^*$ satisfying $\| A^* \|_2 \le 1$ and $\| A \|_* = \langle A,A^* \rangle_F$. Note that we then have $\langle X, A^* \rangle_F \le \| X \|_F \| A^* \|_2 \le \| X \|_F$ for every rank-one tensor $X$. Since $\| Y_\ell \|_F = \| A \|_2$, it hence follows from~\eqref{eq: decomposition in rank one} that 
\begin{equation}\label{eq: estimate}
\Vert A \Vert_* = \langle A, A^* \rangle_F = \left(\frac{\,\Vert A\Vert_F}{\Vert A\Vert_2}\right)^2 \sum_{\ell=1}^r \alpha_\ell  \langle Y_\ell, A^* \rangle_F 
\leq\frac{\,\Vert A\Vert^2_F}{\Vert A\Vert_2},  
\end{equation}
which is the converse inequality to \eqref{eq:F2*}. This shows that (i) implies (ii).

Assume now that (ii) holds for a nonzero tensor $A$, that is, $\Vert A\Vert_2\Vert A\Vert_*=\Vert A\Vert_F^2$. By the definition of the nuclear norm there exist $r\in \K{N}$, positive numbers $\beta_1,\dots, \beta_r>0$, and rank-one tensors $X_1,\dots,X_r$ of unit Frobenius norm such that 
\begin{equation}\label{eq:dec5}
  A=\sum_{\ell=1}^r \beta_\ell X_\ell \quad \txt{and}\quad \Vert A\Vert_*=\sum_{\ell=1}^r \beta_\ell.
\end{equation}
If $A$ is symmetric, $X_1,\dots,X_r$ can be taken symmetric~\cite{FriedlandLim}. Taking the Frobenius inner product with $A$ in the first of these equations gives
\begin{equation}\label{eq:ineq3}
\Vert A\Vert_2\Vert A\Vert_*=  \langle A,A\rangle_F= \sum_{\ell=1}^r \beta_\ell\langle A, X_\ell\rangle_F.
\end{equation}
Since $\langle A, X_\ell\rangle_F \le \| A \|_2$ for $\ell=1,\dots, r$ and since $\beta_1,\dots,\beta_r$ sum up to $\| A \|_*$, this equality can hold only if $\langle A, X_\ell\rangle_F = \| A \|_2$ for $\ell=1,\dots, r$. Since, by~\eqref{eq:app}, the rank-one tensors $Y_\ell = \| A \|_2 X_\ell$, $\ell=1,\dots, r$, are then best rank-one approximations of $A$, we see that~\eqref{eq:dec5} is equivalent to~\eqref{eq: decomposition in rank one}, which by Theorems~\ref{thm:varchar} and~\ref{thm:varcharsym} means that $A$ is critical.
\end{proofof}
\begin{remark}
Observe from the proof that decomposition \eqref{eq: decomposition in rank one} of a critical tensor into its best rank-one approximations is also its nuclear decomposition. Vice versa, any nuclear decomposition of a tensor $A$ satisfying $\Vert A\Vert_2\Vert A\Vert_*=\Vert A\Vert_F^2$ can be turned into a convex linear combination of best rank-one approximations of the rescaled tensor $\Vert A\Vert_2^2/\Vert A\Vert_F^2\, A$.
\end{remark}

\subsection{Decomposition of Chebyshev forms}\label{sec:decomposition}

In this section we give the proof of Proposition \ref{prop:decbinary}, which realizes the decomposition of critical tensors into symmetric best rank-one approximations, that is, corresponding powers of linear forms, for the Chebyshev forms $\Ch_{d,2}$.

\begin{proofof}{Proposition~\ref{prop:decbinary}}
Recall that for any $k=0,\dots, d-1$ we denote $\theta_k = \pi k /d$ and $a_k = \cos(\theta_k)$, $b_k = \sin(\theta_k)$. Let us observe that for any such $k$ we can write
\begin{equation}
  \cos(d\theta)  = \txt{Re} ((-1)^ke^{id(\theta-\theta_k)}) = (-1)^k \sum\limits_{\ell=0}^{[d/2]} {d \choose 2\ell} (-1)^\ell \cos(\theta-\theta_k)^{d-2\ell}\sin(\theta-\theta_k)^{2\ell} 
\end{equation}
and therefore
\begin{equation}
  \cos(d\theta) = \frac{1}{d}  \sum\limits_{\ell=0}^{[d/2]}{d\choose 2\ell}(-1)^\ell \sum\limits_{k=0}^{d-1} (-1)^k \cos(\theta-\theta_k)^{d-2\ell}\sin(\theta-\theta_k)^{2\ell}.
\end{equation}
Below we show that for any $\ell=0,\dots,[d/2]$ it holds that
\begin{equation}\label{eq:to prove}
(-1)^\ell \sum\limits_{k=0}^{d-1} (-1)^k \cos(\theta-\theta_k)^{d-2\ell}\sin(\theta-\theta_k)^{2\ell} = \sum\limits_{k=0}^{d-1} (-1)^k \cos(\theta-\theta_k)^d .  
\end{equation}
This together with the identity $\sum_{\ell=0}^{[d/2]} {d\choose 2\ell} = 2^{d-1}$ implies \eqref{eq:dec2} (and hence also \eqref{eq:dec1}).

To derive \eqref{eq:to prove} we write
\begin{align}
{}&{\phantom{{}={}}}(-1)^\ell \sum\limits_{k=0}^{d-1} (-1)^k \cos(\theta-\theta_k)^{d-2\ell}\sin(\theta-\theta_k)^{2\ell}\\
&=\sum\limits_{k=0}^{d-1} (-1)^k \cos(\theta-\theta_k)^{d-2\ell} \sum_{j=0}^\ell {\ell \choose j} \cos(\theta-\theta_k)^{2j} (-1)^{\ell-j}\\
&=\sum\limits_{j=0}^\ell {\ell \choose j}(-1)^{\ell-j} \sum\limits_{k=0}^{d-1} (-1)^k \cos(\theta-\theta_k)^{d-2(\ell-j)}
\end{align}
and claim that for $j=0,\dots,\ell-1$ the inner sum in the last formula is zero. In fact, we will show that for $s=1,\dots,[d/2]$
\begin{equation}\label{eq:to prove2}
  \sum\limits_{k=0}^{d-1} (-1)^k \cos(\theta-\theta_k)^{d-2s} = 0.
\end{equation}
For this let us observe first that Chebyshev polynomials of the first kind $T_{d-2j}(\cos \theta) = \cos((d-2j)\theta)$, $j=1,\dots,[d/2]$, form a basis in the space spanned by univariate real polynomials of degrees $d-2, d-4,\dots,d-2[d/2]$. As a consequence one can express $\cos(\theta-\theta_k)^{d-2s}$ in terms of $T_{d-2j}(\cos(\theta-\theta_k))$ for $j=s,\dots,[d/2]$, and thus in order to prove \eqref{eq:to prove2}, it is enough to show that for $s=1,\dots,[d/2]$ we have
\begin{equation}
  \sum_{k=0}^{d-1} (-1)^k \cos((d-2s)(\theta-\theta_k)) = 0.
\end{equation}
But this follows  from the identity 
\begin{equation}
 \sum_{k=0}^{d-1} (-1)^ke^{i(d-2s)(\theta-\theta_k)}= e^{i(d-2s)\theta} \sum\limits_{k=0}^{d-1} \left(e^{i 2\pi s / d}\right)^k = 0, 
  \end{equation}
  hence the proof is complete
\end{proofof}
We now derive Corollary \ref{cor:deccubic} which, in particular, implies that the cubic Chebyshev forms $\Ch_{3,n}$ are critical for the ratio $\Vert p\Vert_\infty/\Vert p \Vert_B, p\in P_{3,n}$.
\begin{proofof}{Corollary \ref{cor:deccubic}}
From Proposition \ref{prop:decbinary} we get
\begin{equation}\label{eq:Ch32}
  \Ch_{3,2}(x_1,x_2) = x_1^3-3x_1x^2_2 = \frac{4}{3}\left(x_1^3-\left(\frac{x_1-\sqrt{3}x_2}{2}\right)^3+\left(\frac{-x_1+\sqrt{3}x_2}{2}\right)^3\right).
\end{equation}
We then write
\begin{align}
  \Ch_{3,n}(x) &= x_1^3-3x_1(x_2^2+\dots+x_n^2) =  - (n-2)x_1^3+\sum_{i=2}^n \left(x_1^3-3x_1x^2_i\right) \\
&= - (n-2)x_1^3 +\frac{4}{3}\sum_{i=2}^n x_1^3-\left(\frac{x_1-\sqrt{3}x_i}{2}\right)^3+\left(\frac{-x_1+\sqrt{3}x_i}{2}\right)^3, 
\end{align}
where we applied \eqref{eq:Ch32} to each binary Chebyshev form $\Ch_{3,2}(x_1,x_i)=x_1^3-3x_1x_i^2$. The obtained formula is equivalent to the asserted one \eqref{eq:Ch3n}.
\end{proofof}

\subsection{Local minimality of cubic Chebyshev forms}\label{sub:localoptimality}

This subsection is devoted to the proof of Theorem~\ref{thm:locmin}, which states that the cubic Chebyshev form $\Ch_{3,n}(x)=x_1^3-3x_1(x_2^2+\dots+x_n^2)$ is a local minimum for the ratio of uniform and Bombieri norms. We denote by $G\simeq O(n-1)\subset O(n)$ the subgroup consisting of orthogonal transformations that preserve the point $(1,0,\dots,0)\in \K{R}^n$. Note that $G\subset O(n)$ is of codimension $n-1$ and that $\Ch_{3,n}$ is invariant under $G$. In particular, the $O(n)$-orbit of $\Ch_{3,n}$ is at most $(n-1)$-dimensional. In the following lemma we describe the tangent space to this orbit, a result that we need for the proof of Theorem~\ref{thm:locmin}.
\begin{lemma}\label{lem:orbit}
The $O(n)$-orbit of $\Ch_{3,n}$ has dimension $n-1$ and its tangent space at $\Ch_{3,n}$ consists of all reducible cubics of the form $\ell \cdot q$, where $\ell$ is a linear form that vanishes at $(1,0,\dots,0)\in \K{R}^n$ and $q(x) = 3x_1^2-x_2^2-\dots-x_n^2$.
\end{lemma}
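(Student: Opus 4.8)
The plan is to compute the tangent space to the $O(n)$-orbit of $\Ch_{3,n}$ directly by differentiating the orbit map at the identity. Since the orbit is $\{\rho^* \Ch_{3,n} : \rho \in O(n)\}$, its tangent space at $\Ch_{3,n}$ is the image of the Lie algebra $\mathfrak{o}(n)$ of skew-symmetric matrices under the infinitesimal action. Concretely, for a skew-symmetric matrix $\Omega$, the curve $\rho_t = \exp(t\Omega)$ gives $\frac{d}{dt}\big|_{t=0} (\exp(t\Omega))^* \Ch_{3,n}(x) = -\langle \nabla \Ch_{3,n}(x), \Omega x\rangle$, using $(\rho^* p)(x) = p(\rho^{-1}x)$ and $\frac{d}{dt}\big|_{t=0}\exp(-t\Omega) = -\Omega$. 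So first I would write down this derivative explicitly.

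Next I would identify which skew-symmetric $\Omega$ act nontrivially. Since $\Ch_{3,n}$ is invariant under the stabilizer $G \simeq O(n-1)$ (the block $\begin{psmallmatrix}1 & 0\\ 0 & *\end{psmallmatrix}$), the Lie algebra $\mathfrak{o}(n-1)$ of skew-symmetric matrices supported on the lower-right $(n-1)\times(n-1)$ block lies in the kernel of the infinitesimal action. The complement is spanned by the generators $\Omega_i$ ($i=2,\dots,n$) that rotate in the $(1,i)$-plane, i.e. $\Omega_i e_1 = e_i$, $\Omega_i e_i = -e_1$, and zero otherwise. This gives exactly $n-1$ generators, so the orbit has dimension at most $n-1$; to get the dimension exactly I would check that the images $\frac{d}{dt}\big|_{t=0}(\exp(t\Omega_i))^*\Ch_{3,n}$ are linearly independent forms, which follows because each involves a distinct variable $x_i$. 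This confirms the first assertion of the lemma.

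Then I would compute the image forms explicitly. Writing $r^2 = x_2^2 + \dots + x_n^2$, we have $\nabla \Ch_{3,n}(x) = \big(3x_1^2 - 3r^2,\, -6x_1 x_2,\, \dots,\, -6x_1 x_n\big)$. Applying $-\langle \nabla \Ch_{3,n}(x), \Omega_i x\rangle$ with $\Omega_i x = x_1 e_i - x_i e_1$ yields, after collecting terms, a form proportional to $x_i(3x_1^2 - x_2^2 - \dots - x_n^2) = x_i \cdot q(x)$, where $q(x) = 3x_1^2 - x_2^2 - \dots - x_n^2$ is exactly the quadratic appearing in the statement. Since the linear forms $\ell$ vanishing at $(1,0,\dots,0)$ are precisely the span of $x_2,\dots,x_n$, the tangent space is spanned by $x_2 q, \dots, x_n q$, which is exactly $\{\ell \cdot q : \ell(1,0,\dots,0) = 0\}$, proving the second assertion.

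The main obstacle I anticipate is purely computational: carrying out the differentiation $-\langle \nabla \Ch_{3,n}(x), \Omega_i x\rangle$ carefully so that the cubic $x_i(3x_1^2 - r^2)$ emerges with the correct coefficient, rather than a different-looking but equivalent combination. The key checkpoint is that the gradient has the nonobvious structure in which the $x_1$-derivative contributes $3x_1^2 - 3r^2$ and the $x_i$-derivative contributes $-6x_1 x_i$; combining $x_1 \cdot (-6 x_1 x_i)$ (from the $e_i$ part) with $-x_i \cdot (3x_1^2 - 3r^2)$ (from the $-e_1$ part) must collapse to a multiple of $x_i(3x_1^2 - r^2)$. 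Everything else (dimension count, identification of the kernel with $\mathfrak{o}(n-1)$, linear independence) is structural and should go through without difficulty.
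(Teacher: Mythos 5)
Your proposal is correct and follows essentially the same route as the paper: the paper also differentiates the curves of elementary rotations in the $(x_1,x_i)$-planes (your $\exp(t\Omega_i)$), finds the tangent vectors proportional to $x_i q$, and combines their linear independence with the upper bound $\dim \le n-1$ coming from the $G$-invariance of $\Ch_{3,n}$. Your computation via $-\langle \nabla \Ch_{3,n}(x), \Omega_i x\rangle = 3x_i q(x)$ is exactly the ``straightforward calculation'' the paper leaves to the reader, carried out correctly.
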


\begin{proof}
For $i=2,\dots,n$ let us consider the elementary rotation $R_i(\varphi)\in O(n)$ in the $(x_1,x_i)$-plane, that is, $R_i(\varphi)$ is given by the $n \times n$ matrix whose only non-zero entries are $(R_i(\varphi))_{11}=(R_i(\varphi))_{ii}=\cos(\varphi)$, $(R_i(\varphi))_{1i}=-(R_i(\varphi))_{i1}=\sin \varphi$, and $(R_i(\varphi))_{jj}=1$ for $j \neq 1, i$. It is a straightforward calculation to check that the tangent vector to the curve $\varphi \mapsto R_i(\varphi)^*\Ch_{3,n}$ at $\varphi=0$ is a nonzero cubic proportional to $x_iq$. For $i=2,\dots,n$ these $n-1$ tangent vectors are linearly independent, and, since the $O(n)$-orbit of $\Ch_{3,n}$ is at most $(n-1)$-dimensional, the claim follows.
\end{proof}

\begin{proofof}{Theorem \ref{thm:locmin}}
Let $S=\{p\in P_{3,n}:\Vert p\Vert_B=\Vert \Ch_{3,n}\Vert_B\}$ denote the sphere of radius $\| \Ch_{3,n} \|_B$ in $(P_{3,n}, \Vert\cdot\Vert_B)$. Denote by $H$ any $(n-1)$-dimensional submanifold of $O(n)$ that passes through the identity $\txt{id}\in O(n)$ and intersects $G$ transversally at $\txt{id}\in H\cap G$. Denote also by $M$ any submanifold of $S$ that has codimension $n-1$, passes through $\Ch_{3,n}\in S$, and intersects the $O(n)$-orbit of $\Ch_{3,n}$ transversally at $\Ch_{3,n}$. Consider now the smooth map $f: H\times M\rightarrow S$, $(h,m)\mapsto h^*m$ and note that by construction the differential of $f$ at $(\txt{id},\Ch_{3,n})$ is surjective. In particular, $f$ maps some open neighborhood of $(\txt{id},\Ch_{3,n})\in H\times M$ to an open neighborhood of $\Ch_{3,n}\in S$. Therefore, since the uniform norm is $O(n)$-invariant, in order to prove the claim of the theorem it is enough to show that $\Ch_{3,n}\in M$ is a local minimum of the uniform norm restricted to $M$. To prove the latter let us denote by $T$ the sphere of radius $\| \Ch_{3,n} \|_B$ in the tangent space to $M$ at $\Ch_{3,n}$. We claim that there exists a constant $\delta > 0$ such that for any $p \in T$ there exists a point $x\in \K{R}^n$, $\Vert x\Vert=1$, 
such that
\begin{equation}\label{eq: claim for p}
\abs{\Ch_{3,n}(x)} = 1 \quad \text{and} \quad \Ch_{3,n}(x) p(x) \ge \delta. 
\end{equation}
It then follows for the geodesic $\gamma_p(t)=\cos t \cdot \Ch_{3,n}+\sin t \cdot p$ that
\[
 \| \gamma_p(t) \|_\infty \ge \abs{\cos t\, \Ch_{3,n}(x) + \sin t \,p(x)} \ge \cos t + \delta \sin t \geq 1 = \| \Ch_{3,n} \|_\infty
\]
for all $0 \le t \le t_\delta$, where $t_\delta>0$ depends only on $\delta$. This proves that $\Ch_{3,n}\in M$ is a local minimum of the uniform norm restricted to $M$.

In order to show~\eqref{eq: claim for p} let us define
\begin{align}
  C_n &= \{\pm e_1 \}\cup\left\{\pm \frac{1}{2}e_1+\frac{\sqrt{3}}{2} \rho e_2 : \rho\in G\right\} \\
   &=\{\pm e_1\}\cup \{x\in \K{R}^n: \| x \| = 1, \, 3x_1^2-x_2^2-\dots-x_n^2=0\},
\end{align}
where $e_1=(1,0,\dots,0)$ and $e_2=(0,1,0,\dots,0)$. From the $G$-invariance of $\Ch_{3,n}$ one can see that $C_n$ is the set of unit vectors $x\in \K{R}^n$, $\Vert x\Vert=1$, satisfying $|\Ch_{3,n}(x)|=1$ and 
\begin{equation}\label{eq:values}
  \Ch_{3,n}(\pm e_1)=\pm 1,\quad \Ch_{3,n}\left(\pm\frac{1}{2}e_1+\frac{\sqrt{3}}{2}\rho e_2\right)=\mp 1,\quad \rho \in G.
\end{equation}
From Lemma~\ref{lem:orbit} it follows that a nonzero form $p \in P_{3,n}$ vanishes on $C_n$ if and only if it belongs to the tangent space of the $O(n)$-orbit of $\Ch_{3,n}$ at $\Ch_{3,n}$.\footnote{It is interesting to state this in the language of symmetric tensors: the tangent space of the $O(n)$-orbit of the symmetric tensor associated with $\Ch_{3,n}$ is the orthogonal complement of the span of its symmetric best rank-one approximations.} In particular, no $p\in T$ vanishes on the whole of $C_n$. From compactness of both $C_{n}$ and $T$ we hence conclude that
\begin{equation}\label{eq: max for abs p}
 \max_{x \in C_n} \abs{p(x)} \ge \delta^\prime
\end{equation}
for some $\delta^\prime > 0$ and all $p \in T$. Now put $\delta= \delta^{\prime}/(10n)$. Given $p\in T$, let $x^\prime\in C_n$ be such that $|p(x^\prime)|\geq \delta^\prime$. If $p(x^\prime)$ and $\Ch_{3,n}(x^\prime)$ have the same sign, \eqref{eq: claim for p} obviously holds as $\delta^\prime > \delta$. We now treat the case when $\Ch_{3,n}(x^\prime)p(x^\prime)<0$. Note first that, as a consequence of the $G$-invariance of $\Ch_{3,n}$, together with the decomposition \eqref{eq:Ch3n}, we have the whole family of decompositions
\begin{equation}\label{eq:manydec}
   \Ch_{3,n}(x) = \frac{n+2}{3}x_1^3+\frac{4}{3}\sum\limits_{i=2}^n -\left\langle v^+_{\rho,i}, x\right\rangle^3+\left\langle v_{\rho,i}^-, x\right\rangle^3,\quad \rho\in G,
\end{equation}
where $v^\pm_{\rho,i}= \pm 1/2 e_1+\sqrt{3}/2 \rho e_i$ and $e_i$ is the $i$th unit vector, $i=1,\dots,n$.
The set of possible $v^\pm_{\rho,i}$ for different $\rho\in G$ coincides with $C_n \setminus \{ \pm e_1\}$. Therefore, $x^\prime$ either is $\pm e_1$ (in which case we can assume that $x^\prime = e_1$) or is among $v^\pm_{\rho,i}$, $i=2,\dots, n$, for some $\rho \in G$. Since $p$ is tangent to $S$ at $\Ch_{3,n}$, that is, $\langle p,\Ch_{3,n}\rangle_B=0$, and since $\langle p,\langle v,\cdot\rangle^3\rangle_B = p(v)$, we get from \eqref{eq:values} and \eqref{eq:manydec} that
\[
   0 = \langle p, \Ch_{3,n} \rangle_B = \frac{n+2}{3}\Ch_{3,n}(e_1)p(e_1)+\frac{4}{3}\sum\limits_{i=2}^n\Ch_{3,n}(v^+_{\rho,i}) p(v^+_{\rho,i})+\Ch_{3,n}(v^-_{\rho,i})p(v^-_{\rho,i}).
\]
One of these terms features $\Ch_{3,n}(x^\prime)p(x^\prime) \le -\delta^\prime$. 
Elementary estimates then show that for some $x$ among $e_1$ and $v^{+}_{\rho,i}$, $v^-_{\rho,i}$, $i=2,\dots, n$, we must have $\Ch_{3,n}(x)p(x)\geq \delta^{\prime}/(10n) = \delta$. We thus have verified~\eqref{eq: claim for p} for all $p \in T$ and some $\delta > 0$, which concludes the proof.
\end{proofof}

\subsection*{Acknowledgment}
We thank Zhening~Li for pointing out a counterexample to the global optimality of Chebyshev forms $\Ch_{3,n}$ as presented in section~\ref{sec: Chebyshev forms}.


\noindent

\bigskip{\footnotesize
\noindent
{International School for Advanced Studies, 34136 Trieste, Italy}\\  
\noindent
\texttt{agrachev@sissa.it}}

\bigskip{\footnotesize
\noindent
{Max Planck Institute for Mathematics in the Sciences, 04103 Leipzig, Germany}\\ 
\noindent
\texttt{kozhasov@mis.mpg.de}}

\bigskip{\footnotesize
\noindent
{Max Planck Institute for Mathematics in the Sciences, 04103 Leipzig, Germany}\\
\noindent  
\texttt{uschmajew@mis.mpg.de}}

\end{document}